\documentclass{amsart}
\usepackage{graphicx}
\usepackage{amsfonts}
\usepackage{float}
\usepackage{algorithm}
\usepackage{algorithmic}

\newtheorem{tm}{Theorem}

\newtheorem{defi}{Definition}

\newtheorem{rem}{Remark}
\newtheorem{rems}{Remarks}
\newtheorem{lm}{Lemma}
\newtheorem{ex}{Example}
\newtheorem{cor}{Corollary}
\newtheorem{prop}{Proposition}
\newtheorem{nota}{Notation}

\begin{document}
\title{Midpoints and critical points}
\author{Yousra Gati and Vladimir Petrov Kostov}

\address{Universit\'e de Carthage, EPT-LIM, Tunisie}
\email{yousra.gati@gmail.com}
\address{Universit\'e C\^ote d’Azur, CNRS, LJAD, France}
\email{vladimir.kostov@unice.fr}
%\address{Universit\'e de Carthage, EPT-LIM, Tunisie}
%\email{mohamedchaouki.tarchi@gmail.com}
\begin{abstract}
  For a degree $5$ real polynomial with roots $x_1\leq \cdots \leq x_5$ and
  roots $\xi_1\leq \cdots \leq \xi_4$ of its derivative, we set
  $z_j:=(x_j+x_{j+1})/2$, $1\leq j\leq 4$. We prove that one
  cannot have at the same time
  $\min_{1\leq j\leq 3}(z_{j+1}-z_j)\geq \min_{1\leq j\leq 3}(\xi_{j+1}-\xi_j)$ and
  $\max_{1\leq j\leq 3}(z_{j+1}-z_j)\geq \max_{1\leq j\leq 3}(\xi_{j+1}-\xi_j)$.
  The result settles a general question about midpoints and critical points
  of hyperbolic polynomials.\\ 

  {\bf Key words:} hyperbolic polynomial; derivative; Rolle's theorem\\

{\bf AMS classification:} 26C10, 30C15, 65-04
\end{abstract}
\maketitle

\section{Introduction}

We consider degree $d$ {\em hyperbolic} polynomials, i.~e. real uni-variate
polynomials with $d$ real roots. Such a monic polynomial can be
represented in the form 

$$P(x)=(x-x_1)(x-x_2)\cdots (x-x_d)~,~~~\, x_1\leq x_2\leq \cdots
\leq x_d~.$$
In most cases we are interested in {\em strictly hyperbolic} polynomials,
i.~e. with all
roots distinct. For a strictly hyperbolic polynomial, the classical Rolle's
theorem states about the critical points $\xi_j$ (i.~e. roots of $P'$)
that $\xi_j\in (x_j,x_{j+1})$, $j=1$, $\ldots$, $d-1$.

It is natural to compare the roots $\xi_j$ with the
{\em midpoints} $z_j:=(x_j+x_{j+1})/2$. For $d=2$, it is clear that $\xi_1=z_1$.
For $d\geq 3$, the following lemma holds true:

\begin{lm}\label{lmbasic}
  Suppose that $d\geq 3$. Then $\xi_1<z_1$ and $\xi_{d-1}>z_{d-1}$, so
  $\xi_{d-1}-\xi_1>z_{d-1}-z_1$.
\end{lm}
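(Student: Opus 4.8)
The plan is to work with the logarithmic derivative $P'/P=\sum_{i=1}^{d}1/(x-x_i)$, evaluated at the two extreme critical points $\xi_1$ and $\xi_{d-1}$. I will assume, as is the case of interest, that $P$ is strictly hyperbolic (some distinctness of the extreme roots is anyway indispensable: if $x_1=x_2$, then $z_1=x_1$ is a multiple root of $P$, hence $\xi_1=z_1$ and the strict inequality fails). By Rolle's theorem $\xi_1\in(x_1,x_2)$, so $\xi_1$ is not a root of $P$ and
$$\frac{1}{\xi_1-x_1}+\frac{1}{\xi_1-x_2}=-\sum_{i=3}^{d}\frac{1}{\xi_1-x_i}~.$$
Because $d\geq 3$ the sum on the right is non-empty, and each of its terms is negative since $\xi_1<x_2\leq x_3\leq\cdots\leq x_d$; hence the left-hand side is strictly positive.

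The decisive elementary step is to rewrite the left-hand side as $(2\xi_1-x_1-x_2)\big/\big[(\xi_1-x_1)(\xi_1-x_2)\big]$ and to observe that its denominator is negative, since $x_1<\xi_1<x_2$. A positive fraction with negative denominator must have negative numerator, so $2\xi_1-x_1-x_2<0$, i.e. $\xi_1<z_1$. The inequality $\xi_{d-1}>z_{d-1}$ follows symmetrically — either by repeating the argument with the two ends exchanged, or by applying what has just been proved to $P(-x)$: one writes $1/(\xi_{d-1}-x_{d-1})+1/(\xi_{d-1}-x_d)=-\sum_{i=1}^{d-2}1/(\xi_{d-1}-x_i)$, notes that the right-hand side is now strictly negative (each summand $1/(\xi_{d-1}-x_i)$ is positive because $\xi_{d-1}>x_{d-1}\geq x_i$), that the denominator $(\xi_{d-1}-x_{d-1})(\xi_{d-1}-x_d)$ is again negative, and concludes $2\xi_{d-1}-x_{d-1}-x_d>0$. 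Adding the two strict inequalities $\xi_{d-1}>z_{d-1}$ and $z_1>\xi_1$ gives the last assertion $\xi_{d-1}-\xi_1>z_{d-1}-z_1$.

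I do not expect a genuine obstacle here; the only things to watch are the sign bookkeeping in the two-term fraction and the observation that the hypothesis $d\geq 3$ is exactly what makes the auxiliary sum non-empty, hence the inequalities strict rather than equalities. As a cross-check, and as a possible alternative write-up, one can instead evaluate $P'$ at $z_1$ directly: since $z_1-x_1$ and $z_1-x_2$ are opposite in sign and equal in absolute value, $(P'/P)(z_1)=\sum_{i\geq 3}1/(z_1-x_i)<0$, and comparing this sign with the (constant, opposite on the two pieces) sign of $P'$ on the subintervals of $(x_1,x_2)$ cut off by the simple root $\xi_1$ forces $z_1$ to lie to the right of $\xi_1$; this is the same computation in another guise.
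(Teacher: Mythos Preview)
Your proof is correct and uses essentially the same idea as the paper's: both exploit the logarithmic derivative $P'/P=\sum_i 1/(x-x_i)$ and the cancellation of the two extreme terms at the midpoint. The only cosmetic difference is the point of evaluation: the paper computes $(P'/P)(z_{d-1})=\sum_{j=1}^{d-2}1/(z_{d-1}-x_j)>0$ directly and concludes $\xi_{d-1}\in(z_{d-1},x_d)$ from the sign, whereas your primary write-up evaluates at $\xi_1$ and reads off the sign of $2\xi_1-x_1-x_2$ from the two-term fraction; your ``alternative write-up'' in the last paragraph is exactly the paper's argument.
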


\begin{proof}
  Indeed, $(P'/P)(z_{d-1})=\sum_{j=1}^{d-2}1/(z_{d-1}-x_j)>0$ and
  $\lim_{x\rightarrow x_d^-}(P'/P)(x)=-\infty$, so $\xi_{d-1}\in (z_{d-1},x_d)$.
  In a similar way one shows that $\xi_1\in (x_1,z_1)$.
  \end{proof}

In what follows we deal with the quantities

$$\begin{array}{cclccl}
  m&:=&\min_j(z_{j+1}-z_j)~,&M&:=&
  \max_j(z_{j+1}-z_j)~,\\ \\
  \tilde{m}&:=&\min_j(\xi_{j+1}-\xi_j)~,&
  \tilde{M}&:=&
  \max_j(\xi_{j+1}-\xi_j)~,\\ \\ j&=&1,~\ldots ,~d-2~.\end{array}$$
As $z_{j+1}-z_j=(x_{j+2}-x_{j+1})/2+(x_{j+1}-x_j)/2=(x_{j+2}-x_j)/2$, it is clear that

\begin{equation}\label{equMRiesz}
  m\geq m^{\dagger}:=\min_{j=1,\ldots ,d-1}(x_{j+1}-x_j)~~~\, {\rm and}~~~\, 
M\leq M^{\dagger}:=\max_{j=1,\ldots ,d-1}(x_{j+1}-x_j)~.\end{equation}
In this text we consider the more interesting question to
compare $m$ with $\tilde{m}$
and $M$ with $\tilde{M}$. For a hyperbolic polynomial $P$,
we check whether the following
two inequalities hold true:

\begin{equation}\label{eqineqLR}
  (L)~:~m\leq \tilde{m}~~~\, \, \, {\rm and}~~~\, \, \,
  (R)~:~\tilde{M}\leq M~.
  \end{equation}
\begin{defi}
  {\rm We say that the polynomial $P$ {\em realizes} the case $L+$
    (resp. $R-$) if
the inequality $(L)$ holds true (resp. if $(R)$ fails); similarly for the
cases $L-$ and $R+$. We say that $P$ realizes the case $L-R+$ if the inequality
$(L)$ fails while the inequality $(R)$ holds true (and similarly for the cases
$L-R-$, $L+R-$ and $L+R+$).

Accordingly, we say that a case $L\pm$, $R\pm$ or $L\pm R\pm$
is {\em realizable}
  if there exists a strictly hyperbolic polynomial realizing this case.}
  \end{defi}

With the present paper we settle definitely the question about the
realizability of all cases $L\pm R\pm$, for any degree $d\geq 3$.
The results are presented in the following table:

$$\begin{array}{lccccc} 
  d&&L+R+&L+R-&L-R+&L-R-\\ \\ 3&&{\rm No}&{\rm Yes}&{\rm No}&{\rm No}\\ \\
  4&&{\rm Yes}&{\rm Yes}&{\rm No}&{\rm No}\\ \\
  5&&{\rm Yes}&{\rm Yes}&{\rm\bf No}&{\rm Yes}\\ \\ \geq 6&&{\rm Yes}&
  {\rm Yes}&Yes&{\rm Yes}\end{array}$$
In the present text we justify the answer {\bf ``No''} for the case $L-R+$
with $d=5$. All other cases except the {\em Yes}-case $L-R+$ with $d=6$ are
settled in~\cite{DiKo}. The positive answer to $L-R+$ with $d=6$
is given in \cite{GaKoTa} (see also \cite[Example~1.8.1]{KoDeGruyter}),
where a numerical method for rapid search
of a realizing polynomial is developed. The method proposes the following  
polynomial realizing the case $L-R+$ with $d=6$:

$$x^6-1.6x^5+0.53x^4+0.122578x^3-0.03793509x^2-0.0025040322x+0.000600530112~.$$

The above table shows that the case $L-R+$ with $d=5$ is on the border
between the Yes- and No-answers.
It requires to consider a three-parameter family of polynomials (see
Subsection~\ref{subsecparam}). Moreover, there are polynomials in this family
which
realize the cases $L-R-$, $L+R-$ and $L+R+$ (see
Example~\ref{ex2possibilities}), therefore when justifying the
answer {\bf ``No''} one cannot adopt one and the same approach
to all
polynomials of the family. In this sense the case $L-R+$ with $d=5$ is the
most difficult and the most interesting. So the main result
of this paper reads:  

\begin{tm}\label{tm1}
  For $d=5$, the case $L-R+$ is not realizable.
\end{tm}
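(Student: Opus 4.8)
The plan is to turn Theorem~\ref{tm1} into an effectively checkable statement about a three‑parameter family and then to settle it by a case analysis backed up by explicit estimates.

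All four quantities $m,M,\tilde m,\tilde M$ are covariant under orientation‑preserving affine substitutions $x\mapsto\alpha x+\beta$ ($\alpha>0$), so the case a polynomial realizes depends only on its affine class. I would therefore work with the normal form of Subsection~\ref{subsecparam}; concretely, translate $P$ so that $\sum x_i=0$ and rescale so that the coefficient of $x^3$ equals $-1$, which is legitimate because for a strictly hyperbolic polynomial with vanishing root sum that coefficient is $e_2=-\tfrac12\sum x_i^2<0$. One is then left with $P(x)=x^5-x^3+ax^2+bx+c$, and the normalized roots always satisfy $\sum x_i=0$, $\sum x_i^2=2$; hence $(a,b,c)$ ranges over a \emph{compact} region $\Pi\subset\mathbb R^3$, and on $\mathrm{int}\,\Pi$ the roots $x_1<\cdots<x_5$, the critical points $\xi_1<\cdots<\xi_4$ (which then also satisfy $\sum\xi_j=0$, $\sum\xi_j^2=\tfrac65$), and the midpoints $z_j$ are continuous, piecewise‑algebraic functions of $(a,b,c)$.

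Introducing $x_i,\xi_j$ as auxiliary variables subject to $P(x_i)=0$, $P'(\xi_j)=0$, the strict orderings and the normalization, and using $z_{j+1}-z_j=(x_{j+2}-x_j)/2$, the hypothesis that $P$ realizes $L-R+$ — indeed already the weaker hypothesis of the abstract, $m\ge\tilde m$ and $M\ge\tilde M$ — becomes a semialgebraic condition on $(x_i,\xi_j)$, so Theorem~\ref{tm1} asserts the emptiness of an explicit semialgebraic set. I would not run blind quantifier elimination but organize everything around Lemma~\ref{lmbasic}, which gives $\xi_1<z_1<z_4<\xi_4$, hence $\sum_{j=1}^3(\xi_{j+1}-\xi_j)=\xi_4-\xi_1>z_4-z_1=\sum_{j=1}^3(z_{j+1}-z_j)$. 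If moreover $m\ge\tilde m$ and $M\ge\tilde M$, then sorting the two triples of gaps and comparing their sums forces the \emph{median} $z$‑gap to be strictly smaller than the median $\xi$‑gap. Thus the purely combinatorial information is consistent, and the whole difficulty is to exclude this single middle configuration; this is also the structural reason why no one inequality can cover the entire family.

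Accordingly I would stratify $\mathrm{int}\,\Pi$ by the combinatorial type of $P$: which of $x_3-x_1,x_4-x_2,x_5-x_3$ is largest and which is smallest, the analogous data for $\xi_2-\xi_1,\xi_3-\xi_2,\xi_4-\xi_3$, and the positions of $\xi_2$ and $\xi_3$ relative to the neighbouring midpoints, the involution $x\mapsto-x$ (which reverses both triples) halving the list of types. In each type the inequalities $m\ge\tilde m$, $M\ge\tilde M$ unfold into concrete polynomial inequalities in $(x_i,\xi_j)$, to be attacked with the orderings, Rolle's theorem and Lemma~\ref{lmbasic}, the partial‑fraction identity $P'/P=\sum_i 1/(x-x_i)$ evaluated at the $\xi_j$ and the $z_j$, the Newton relations between the symmetric functions of the $\xi_j$ and those of the $x_i$ (for a monic quintic $\sum\xi_j=\tfrac45e_1$, $\sum_{i<j}\xi_i\xi_j=\tfrac35e_2$, $\sum\xi_i\xi_j\xi_k=\tfrac25e_3$, $\xi_1\xi_2\xi_3\xi_4=\tfrac15e_4$), and, where it helps, the already known degree‑$4$ answer of \cite{DiKo} applied to the quartic $P'$. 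For types that admit a hand argument I would exhibit an explicit nonnegative combination of the constraints that is manifestly negative; the remaining types I would settle by a rigorous computation over the relevant compact sub‑region of $\Pi$ (resultants and Sturm sequences, or certified interval arithmetic), and points of $\partial\Pi$ where roots coalesce by passing to the limit and invoking the lower‑degree results, or by excluding them outright. The main obstacle — and the reason this case is ``the most difficult and most interesting'' — is precisely this step: the stratification is large, several strata genuinely require computer‑assisted verification, and keeping the bookkeeping of orderings and sign conditions consistent across strata is delicate.
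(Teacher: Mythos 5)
What you have written is a strategy, not a proof: nearly every substantive step is phrased in the conditional (``I would stratify'', ``I would exhibit'', ``the remaining types I would settle by a rigorous computation''), and no inequality, stratum, or computation is actually carried out. The one concrete deduction you do make — Lemma~\ref{lmbasic} gives $\xi_4-\xi_1>z_4-z_1$, so if $m\ge\tilde m$ and $M\ge\tilde M$ the median $\xi$-gap must strictly exceed the median $z$-gap — is correct and is a nice structural remark not stated explicitly in the paper, but it only narrows the configuration; it does not rule it out (and indeed cannot, since the same deduction is valid for $d\ge 6$ where $L-R+$ \emph{is} realizable). The burden of the theorem is entirely in excluding that remaining configuration for $d=5$, and your proposal leaves that burden untouched.

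On the parts that are spelled out, a few remarks. Your normalization $\sum x_i=0$, $e_2=-1$ is legitimate and gives a compact coefficient domain $\Pi$, but it is considerably less convenient than the paper's root normalization $x_1=0$, $x_5=1$, $0\le a\le b\le c\le 1$: in $\Pi$ the strata you propose (which gap is extremal, where $\xi_2,\xi_3$ sit relative to midpoints) are cut out by complicated algebraic surfaces in coefficient space, whereas in the paper's $S$ the cuts are by planes $b=\mathrm{const}$, $c=\mathrm{const}$, $c=3b$, making the geometry tractable. More importantly, you never isolate the tool that makes the hybrid argument actually close: Proposition~\ref{prop0}, a Markov-type monotonicity and total-shift estimate for the $\xi_j$ under perturbations of the $x_i$. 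That proposition is what lets the paper (i) push parameters to the boundary of a stratum to get extremal values of $\xi_3-z_3$ or $\xi_4-z_4$ in Propositions~\ref{prop3.1}–\ref{prop0506R-}, and (ii) certify a finite grid computation with an explicit slack $6\delta$ in Section~\ref{secnumer}. Your mention of ``certified interval arithmetic'' gestures at the same need, but without a quantitative perturbation bound the passage from a finite set of grid evaluations to a statement over all of $\Pi$ is unjustified. In short: the overall architecture (compact three-parameter family, analytic elimination of degenerate strata, rigorous computation on the rest) is the right shape and is essentially the paper's, but as written the proposal contains no proof, and the specific technical lemma that makes both halves of the argument rigorous is missing.
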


To understand how close to realizability the case $L-R+$ is becomes clear from
part~(2) of Example~\ref{ex2possibilities}. In the next section we explain
how Theorem~\ref{tm1} is inscribed within a larger
span of results concerning critical points and midpoints. In
Section~\ref{secmethod} we describe the methods used in the proof of
Theorem~\ref{tm1} and we give a plan of the rest of the paper.

\section{Comments}

The question about realizability of the cases $L\pm R\pm$ with regard
to the quantities $m$, $M$, $\tilde{m}$ and $\tilde{M}$ is inspired by 
a classical result of Marcel Riesz and by a question
concerning real entire functions of order $1$ having only real
zeros, asked recently by David Farmer and Robert Rhoades.
We remind that the set of real
entire functions of order at most $2$ with real zeros only
is called the {\em Laguerre-P\'olya class}. We call such functions
$\mathcal{LP}$-{\em functions}. We limit ourselves to discussing
only $\mathcal{LP}$-functions of order $1$ since only they are
concerned by the Farmer-Rhoades problem.
We suppose that the multiplicity of the zeros is at most $2$, because
in the presence of a triple root one obtains $m=\tilde{m}=0$. 
For a hyperbolic polynomial, Riesz has shown that $m^{\dagger}>m$
whenever the roots are distinct
(i.~e. the polynomial is strictly hyperbolic), see
\cite{FaRh}, \cite{Ob} and \cite{St}.

To cite the result of Farmer-Rhoades we need to remind that
$\mathcal{LP}$-functions are uniform limits on compact sets
of sequences of hyperbolic polynomials. Within this class there is the
subclass of $\mathcal{LP}I$-{\em functions} (of order~$1$) which are such limits of
hyperbolic polynomials with all roots of the same sign. For $f$ an
$\mathcal{LP}I$-function with
zeros $x_k$, arranged in increasing order, and for $a\in \mathbb{R}$,
denote by $\xi <\eta$
two consecutive zeros of the function $f'+af$. It is proved
in \cite{FaRh} (see Theorem~2.3.1 therein) that

$$\inf \{ x_{k+1}-x_k\} \leq \eta -\xi \leq \sup \{ x_{k+1}-x_k\}~.$$
In particular, for $a=0$, both inequalities hold true.

In \cite{DiKo} examples of entire functions of order $1$ (but not
$\mathcal{LP}I$-functions) are given for which both inequalities hold true
or one of them fails. 

Given a strictly hyperbolic polynomial, it is a priori clear that its
critical points cannot be too close to its roots. In the aim to make
Rolle's theorem more precise P.~Andrews (see~\cite{An}) has proved that
for $d\geq 2$, one has

\begin{equation}\label{equAn}
  \frac{1}{d-j+1}<\frac{\xi_j-x_j}{x_{j+1}-x_j}<\frac{j}{j+1}~,~~~\,
  j=1,~\ldots ,~d-1~;
\end{equation}
these inequalities are only necessary, but not sufficient conditions.
We mention also Alan Horwitz' results \cite{Ho1}-\cite{Ho2} in this direction.

Boris Shapiro and Michael Shapiro have considered {\em pseudopolynomials}
of degree $d$, i.~e.
smooth functions whose $d$th derivatives vanish nowhere. For the zeros
$x_k^{(j)}$ of the $j$th derivative $x_1^{(j)}\leq \cdots \leq x_{d-j}^{(j)}$,
one has $x_k^{(j-1)}\leq x_k^{(j)}\leq x_{k+1}^{(j-1)}$. In \cite{ShSh}
necessary and sufficient conditions are given how one should choose the
real numbers $x_k^{(j)}$, $j=0$, $1$, $2$, $k=1$, $\ldots$, $3-j$, so that they
could be the zeros of a degree $3$ pseudopolynomial. (For $d=2$, every triple
$x_1^{(0)}<x_1^{(1)}<x_2^{(0)}$ is the triple of zeros of a degree $2$
pseudopolynomial.)

For a strictly hyperbolic polynomial, one can consider the arrangement on the
real axis of all its roots and the roots of all its non-constant derivatives
together. One considers mainly {\em generic} arrangements, i.~e. with no
equality between any two of these $n(n+1)/2$ roots. For $n\leq 3$, any such
arrangement (compatible with Rolle's theorem) is realizable by a strictly
hyperbolic polynomial. For $n=4$, two out of twelve arrangements are not
realizable by strictly hyperbolic polynomials (see~\cite{Ander}),
yet they are realizable by
pseudopolynomials (see~\cite{Ko}). However for $n=5$, there are arrangements
which are not realizable by the roots of strictly hyperbolic polynomials or
pseudopolynomials (and their derivatives), see~\cite{Ko1}. For $n=5$,
the exhaustive answer to the question which generic arrangements 
are realizable by strictly hyperbolic polynomials or by
pseudopolynomials can be found in \cite{Ko2},
\cite{Ko3} and \cite{Ko4}; in these articles pseudopolynomials are called
{\em polynomial-like functions}.

\section{The method of proof\protect\label{secmethod}}

\subsection{The parameter space\protect\label{subsecparam}}

Given a degree $5$ hyperbolic polynomial,
one can perform a linear transformation of the
independent variable after which the smallest root equals
$0$ and the largest equals $1$. The new polynomial realizes the same case
$L\pm R\pm$ as the initial one. That's why we consider the three-parameter
family of polynomials

\begin{equation}\label{equF}
  F:=x(x-a)(x-b)(x-c)(x-1)~,
\end{equation}
whose roots satisfy the conditions $0\leq a\leq b \leq c\leq 1$.
(In some of the lemmas we use other parametrizations as well.) 
The simplex

$$\mathbb{R}^3~\cong ~Oabc~\supset ~\tilde{S}~:~ 0 \leq a \leq b \leq c \leq 1$$
is invariant under the change of variable $x \mapsto 1-x$.
The roots change as follows:

$$a\mapsto 1-c~,~~~\, b\mapsto 1-b~~~\, {\rm and}~~~\, c\mapsto 1-a~.$$
The (hyper)planes $a+c=1$ and $b=1/2$ are invariant under this change.
The latter divides the simplex $\tilde{S}$ into two congruent parts.
It suffices to prove
that the case $L-R+$ is not realizable in one of them. We choose
this to be the part $S$ containing the vertices
$U:=(0,0,0)$ and $V:=(0,0,1)$, see Fig.~\ref{Yousrafig3D}. (We explain below
the role of the plane $c=1/3.1$~.) 
Its other vertices are contained in the plane
$b=0.5$. They are

\begin{equation}\label{equvertices}
I=(0.5,~0.5,~0.5)~,~~~\,  J=(0.5,~0.5,~1)~,~~~\, K=(0,~0.5,~1)~~~\,
{\rm and}~~~\, 
G=(0,~0.5,~0.5)~.\end{equation}

\begin{figure}[htbp]
%\includegraphics[scale=0.5]{parthetanegfirstfour.eps}
%\centerline{\hbox{\includegraphics[scale=0.7]{parthetanegfirstfour.eps}}}
\centerline{\hbox{\includegraphics[scale=0.5]{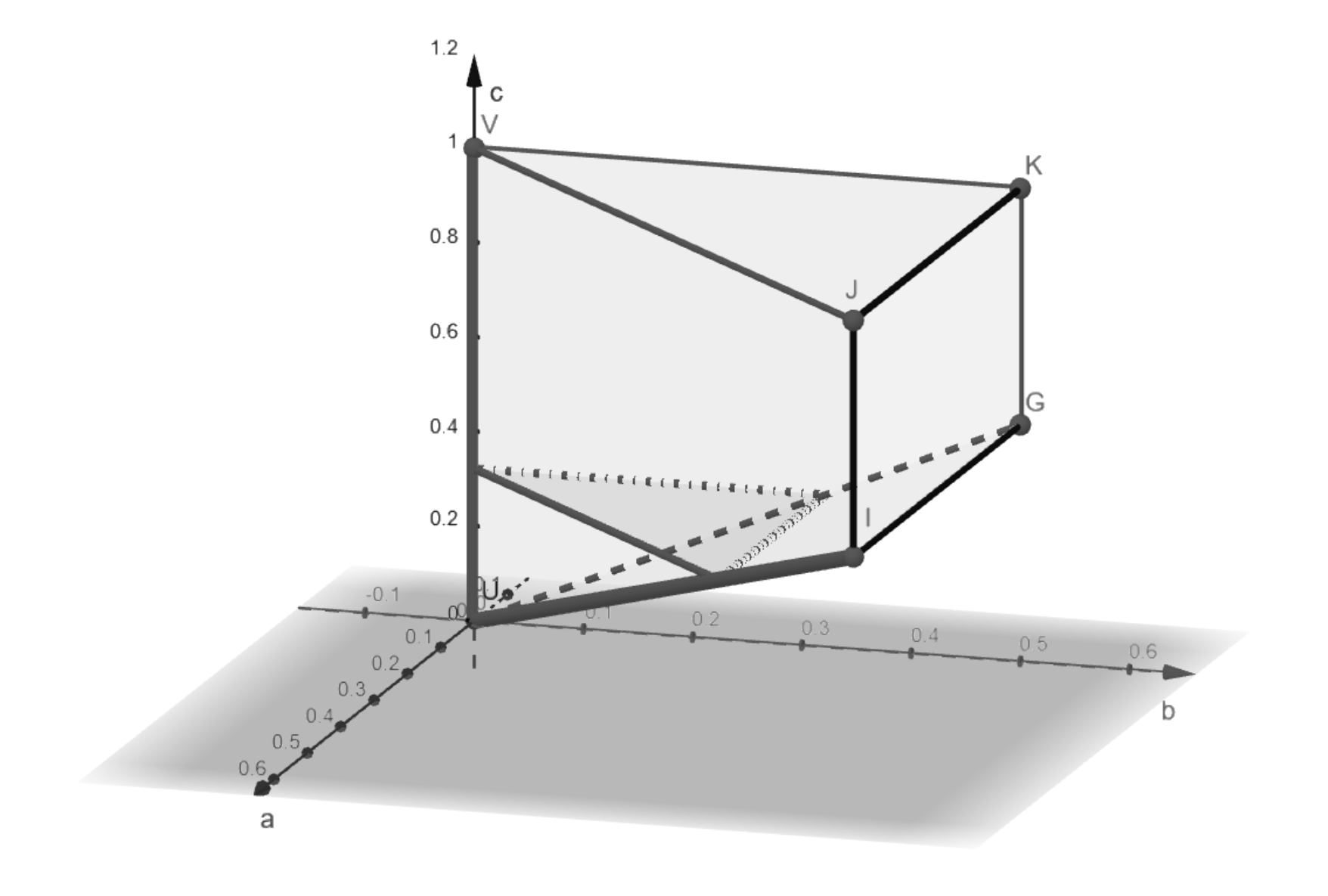}}}
%\centerline{\hbox{\epsfxsize=10cm \epsfbox{k=1234.pdf}}}
\caption{The domain $S$ and its intersection with the plane $c=1/3.1$.}
\label{Yousrafig3D}
\end{figure}

\begin{nota}
  {\rm We denote by $\xi_1 \leq \xi_2 \leq \xi_3 \leq \xi_4$ the roots
    of $F'$ and by}

  $$z_1~=~\frac{a}{2}~\leq ~z_2~=~\frac{a+b}{2}~\leq ~z_3~
  =~\frac{b+c}{2}~\leq~z_4~=~\frac{c+1}{2}$$
  {\rm the midpoints of $F$. The same notation
    is used for the midpoints and roots of derivatives of other degree $5$
  hyperbolic polynomials encountered in the text.}
  \end{nota}

\begin{rem}\label{remmM}
  {\rm As $b\leq 1/2$ on $S$, for the minimal and maximal
    of the distances between consecutive
    midpoints one has}

  $$\begin{array}{cclclc}
    m&:=&\min (\frac{b}{2},\frac{c-a}{2},\frac{1-b}{2})&=&
    \min (\frac{b}{2},\frac{c-a}{2})&{\rm and}\\ \\
    M&:=&\max (\frac{b}{2},\frac{c-a}{2},\frac{1-b}{2})&=&
    \max (\frac{c-a}{2},\frac{1-b}{2})~.&\end{array}$$
  \end{rem}
  
The following example shows why it is not true that
for all points of the domain $S$ one has $L+$ or $R-$. Hence the proof of
non-realizability has to treat different parts of $S$ differently.

\begin{ex}\label{ex2possibilities}
  {\rm (1) For the polynomial $f_1:=x(x-0.5)^2(x-1)^2$
    (it corresponds to the vertex $J$, see (\ref{equvertices}) and
    Fig.~\ref{Yousrafig3D}),
    one obtains $m=M=0.25$.
    The roots of $f_1'$ are}

  $$\xi_1=0.129\ldots ~,~~~\, \xi_2=0.5~,~~~\, \xi_3=0.770\ldots ~,~~~\,
  \xi_4=1~.$$
{\rm Hence $\xi_2-\xi_1>M$ and one has $R-$. At the same time
    $\xi_4-\xi_3<0.23<m$, so one has $L-$ and $f_1$ realizes 
the case $L-R-$. For the polynomial 
$f_2:=x^2(x-0.5)(x-1)^2$ (see the vertex $K$ in (\ref{equvertices}) and
in Fig.~\ref{Yousrafig3D}),
one gets $m=0.25$ and $M=0.5$.
Numerical computation yields}

$$\xi_1=0~,~~~\, \xi_2=0.276\ldots~,~~~\, \xi_3=0.723\ldots~,~~~\, \xi_4=1~.$$
{\rm Thus one has $\tilde{m}=\xi_2-\xi_1>m$ and $\tilde{M}=\xi_3-\xi_2<M$.
  This is the case $L+R+$. By continuity, for all points of $S$ with distinct
    values of $0$, $a$, $b$, $c$, $1$ and sufficiently close to the ones
    corresponding to $f_1$ (resp. $f_2$),
    one has $L-R-$
    (resp. $L+R+$).
    \vspace{1mm}

    (2) We give also three numerical examples from the interior
    of the domain $S$:

    $$\begin{array}{ccllll}
      (a,b,c)&{\rm realizable~case}&~~~m&~~~\tilde{m}&~~M&~~\tilde{M}\\ \\
      (0.1,~0.49,~0.92)&L+R+&0.245&0.2559\ldots&0.41&0.3992\ldots\\ \\
      (0.2,~0.49,~0.92)&L+R-&0.245&0.2584\ldots&0.36&0.3635\ldots\\ \\
      (0.4,~0.49,~0.92)&L-R-&0.245&0.2396\ldots&0.26&0.3268\ldots\end{array}$$
      Hence along a segment of length $0.3$ one encounters all three cases
      different from $L-R+$, the moduli of the differences $\tilde{m}-m$
      and $\tilde{M}-M$ can drop to less than $0.011$, in certain cases to less
      than~$0.0036$. This should be compared to the fact that the roots
      remain always stretched over a segment of length~$1$.}

\end{ex}

\subsection{Moving the roots of hyperbolic polynomials}

The roots $a$, $b$ and $c$ being parameters, it would be useful to know how
the roots of $F'$ change when these parameters vary.

\begin{prop}\label{prop0}
  Suppose that a hyperbolic degree $d$ polynomial $F$ has roots
  $x_1<x_2<\cdots <x_k$ of multiplicities $m_1$, $m_2$, $\ldots$, $m_k$,
  $m_1+\cdots +m_k=d$. Suppose that one of them (say $x_j$) is shifted to
  its right (resp. left) without changing the order and the multiplicities
  of the roots:

  $$x_j\mapsto x_j\pm u~,~~~\, u>0~,~~~\, 
  u<\min_{j=1,\ldots ,k-1}(|x_j-x_{j+1}|)~.$$
  Then every root of the derivative
  $F'$ is shifted to the
  right (resp. left) and the sum of all these shifts equals $(d-1)m_ju/d$
  (resp. $-(d-1)m_ju/d$).
  \end{prop}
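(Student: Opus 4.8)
The plan is to work directly with the logarithmic derivative. Write $F(x)=\prod_{i=1}^k (x-x_i)^{m_i}$, so that
$$\frac{F'(x)}{F(x)}=\sum_{i=1}^k \frac{m_i}{x-x_i}=:R(x).$$
The roots of $F'$ other than the $x_i$ are exactly the roots of $R$, i.e.\ the solutions of $\sum_i m_i/(x-x_i)=0$ lying strictly between consecutive $x_i$, and in addition each $x_i$ with $m_i\geq 2$ is a root of $F'$ of multiplicity $m_i-1$. First I would treat the shift of the simple-root contributions: on each interval $(x_i,x_{i+1})$ the function $R$ is strictly decreasing (its derivative is $-\sum_i m_i/(x-x_i)^2<0$), so it has exactly one simple zero $\xi$ there. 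When $x_j\mapsto x_j+u$ with $u$ small, $R$ is replaced by $R_u$, and on any fixed interval not containing $x_j$ as an endpoint the quantity $m_j/(x-x_j)$ strictly increases if $x>x_j$ (the term becomes less negative / more positive) — wait, one must be careful with signs: $m_j/(x-(x_j+u)) - m_j/(x-x_j) = m_j u /((x-x_j-u)(x-x_j))$, which is positive precisely when $x$ lies outside the interval $[x_j,x_j+u]$. Hence for every zero $\xi$ of $R$ (which lies strictly between two consecutive $x_i$'s, hence outside $[x_j,x_j+u]$ for $u$ small), we have $R_u(\xi)>R(\xi)=0$; since $R_u$ is strictly decreasing near $\xi$ on its interval, the new zero $\xi'$ satisfies $\xi'>\xi$. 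For a multiple root $x_i$ with $i\neq j$ that persists as a critical point, the critical point simply ``moves with'' $x_i$, i.e.\ not at all, so its shift is $0\geq 0$ as required; and $x_j$ itself, if $m_j\geq 2$, contributes a critical point at $x_j$ of multiplicity $m_j-1$ which moves exactly by $+u>0$. This proves every root of $F'$ is shifted (weakly) to the right, and strictly so for the genuine zeros of $R$.

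The second, and really the only computational, step is the identity for the \emph{sum} of the shifts. Here I would use the elementary symmetric function / Vieta bookkeeping: if $F$ has degree $d$ with leading coefficient $1$, then $F'$ has degree $d-1$ with leading coefficient $d$, and the sum of the roots of $F'$ (counted with multiplicity) equals $(d-1)/d$ times the sum of the roots of $F$ — indeed, if $F(x)=x^d - s_1 x^{d-1}+\cdots$ then $F'(x)=d x^{d-1} - (d-1)s_1 x^{d-2}+\cdots$, so the sum of the roots of $F'$ is $(d-1)s_1/d$, where $s_1=\sum_i m_i x_i$ is the sum of the roots of $F$. When $x_j\mapsto x_j+u$, the sum $s_1$ increases by $m_j u$, hence the sum of the roots of $F'$ increases by $(d-1)m_j u/d$. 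Since this sum counts \emph{all} $d-1$ roots of $F'$ (the genuine zeros of $R$ together with the retained multiple roots $x_i$ with their multiplicities $m_i-1$), and we have just shown each of these moves weakly right, the total of the individual shifts is exactly $(d-1)m_j u/d$. The left-shift case is identical by the substitution $x\mapsto -x$, or simply by running the same argument with $u<0$.

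The main obstacle — really a matter of care rather than difficulty — is the sign analysis in the first step: one must verify that every critical point under consideration lies outside the tiny interval $[x_j,x_j+u]$ so that the crucial difference $m_j u/((x-x_j-u)(x-x_j))$ has a definite (positive) sign, and one must correctly account for the critical points created at multiple roots, which behave differently from the zeros of $R$ strictly between roots. The hypothesis $u<\min_i|x_i-x_{i+1}|$ is exactly what guarantees the order and multiplicity pattern is unchanged and that no zero of $R$ (which is squeezed strictly between two consecutive $x_i$) can fall into $[x_j,x_j+u]$. Once these points are pinned down, the sum formula is immediate from Vieta and requires no estimate at all.
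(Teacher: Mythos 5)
Your proposal is correct and follows essentially the same route as the paper: you analyze the sign of $m_j/(x-x_j-u)-m_j/(x-x_j)$ against the monotonicity of $F'/F$ on each gap to get the direction of the shift, and you invoke Vieta on the top two coefficients of $F$ and $F'$ to get the total shift $(d-1)m_ju/d$. Your explicit accounting for the critical points retained at multiple roots $x_i$ (which do not move when $i\neq j$, and move by exactly $u$ when $i=j$) is a small but worthwhile addition that the paper leaves implicit.
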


With regard to this proposition we remind a classical result of Vladimir Markov
(see \cite{Ma}, \cite[Lemma~1, Corollary~2]{Di} or \cite[Lemma~2.7.1]{Ri})
which states that every root of any derivative of a strictly hyperbolic
polynomial is an increasing function of each root of the
polynomial itself.

\begin{proof}
  The function $F'/F=\sum_{i=1}^km_i/(x-x_i)$ is decreasing on every interval

  $$(-\infty ,x_1)~,~~~\, 
  (x_1,x_2)~,~~~\, \ldots ~,~~~\,(x_{k-1},x_k)~,~~~\, (x_k,\infty )~.$$
  The function
  $1/(x-x_j-u)-1/(x-x_j)$ is positive-valued on $(-\infty ,x_j)$ and
  $(x_j+u,\infty )$. Hence every root of $F'$ is shifted to the right.
  (Similarly for the left shift.) If $F=x^d+b_{d-1}x^{d-1}+\cdots$, then
  after the shift one has

  $$F=x^d+(b_{d-1}-m_ju)x^{d-1}+\cdots ~~~\, {\rm and}~~~\, 
  F'=dx^d+(d-1)(b_{d-1}-m_ju)x^{d-1}+\cdots$$
  and the last statement
  of the proposition follows from Vieta's formulae.
  
\end{proof}

\subsection{Plan of the proof of Theorem~\protect\ref{tm1}}

The proof of Theorem~\ref{tm1} consists of an analytic and a numerical part.
The aim of the analytic part (see Section~\ref{secamput})
is to prove that on some subset $S_1$ of $S$
the case $L-R+$ is not realizable. The set $S_1$ contains all polynomials
having triple or quadruple roots. 
Avoiding them is necessary in order the
numerical part to be correctly defined. The set
$S_2:=\overline{S\setminus S_1}$ is the cylinder

$$S_2~:=~\{ ~(a,b,c)\in \mathbb{R}^3~
|~a\in [0,~b]~,~b\in [0.25,~0.5]~,~c\in [0.6,~1]~\}~$$
over the right trapezoid

$$\{ ~(a,b)\in \mathbb{R}^2~|~a\in [0,~b]~,~b\in [0.25,~0.5]~\}~.$$ 
We set $S_1=T_1\cup \cdots \cup T_6$, where the borders of the
sets $T_j$ are defined by
the border of the set $S$ and by one or two planes
whose equations do not depend on the variable $a$ 
(see Fig.~\ref{Yousrafig2D} in which we present the projections
of these sets in the plane $Obc$). We list here the sets $T_j$, the conditions
defining them and the statements in the text where they are mentioned:

$$\begin{array}{ccll}
  T_1&c\leq 1/3.1=0.322\ldots &{\rm Proposition~\protect\ref{prop3.1}}&{\rm
    Subsection~\protect\ref{subsecamput1}}\\ \\
  T_2&c\geq 3b,~b\in [0,0.25]&{\rm Corollary~\protect\ref{corc3b}}&{\rm
    Subsection~\protect\ref{subsecamput1}}\\ \\
  T_3&b\geq 0.1,~c\leq 0.5&{\rm Proposition~\protect\ref{prop0105}}&
  {\rm Subsection~\protect\ref{subsecamput2}}\\ \\
  T_4&b\in [1/6,0.25],~c\geq 0.5&
  {\rm Proposition~\protect\ref{prop1614}}&
  {\rm Subsection~\protect\ref{subsecamput2}}\\ \\
  T_5&b\in [0.35,0.5],~c\in [0.5,0.6]&
  {\rm part~(2)~of~Proposition~\protect\ref{prop0506R-}}&
  {\rm Subsection~\protect\ref{subsecamput3}}\\ \\ 
 % &&{\rm part~(2)~of~Proposition~\protect\ref{prop0506R-}}&\\ \\ 
  T_6&b\in [0.25,0.35],~c\in [0.5,0.6]&
  {\rm part~(1)~of~Proposition~\protect\ref{prop0506R-}}&
  {\rm Subsection~\protect\ref{subsecamput3}}~.
  \end{array}$$
In Fig.~\ref{Yousrafig2D} the projection in the $Obc$-plane
of the set $S_2$ is given in
black, the ones of $T_3$, $T_4$ and $T_5$ are shaded in grey. The projection
of $T_6$ (between the ones of $S_2$, $T_5$, $T_4$ and $T_3$) is in white.
\begin{figure}[htbp]
%\includegraphics[scale=0.5]{parthetanegfirstfour.eps}
%\centerline{\hbox{\includegraphics[scale=0.7]{parthetanegfirstfour.eps}}}
\centerline{\hbox{\includegraphics[scale=0.6]{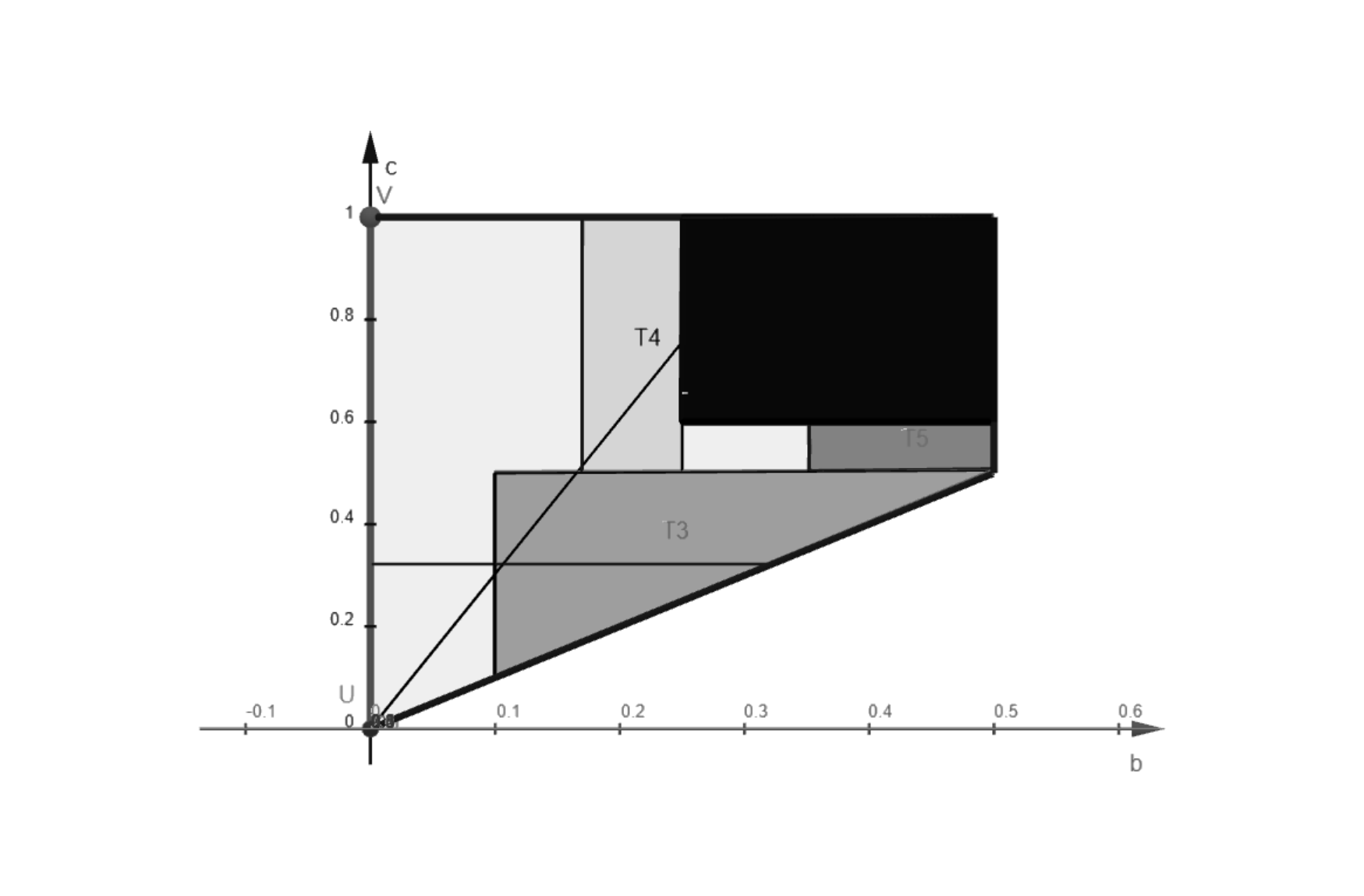}}}
%\centerline{\hbox{\epsfxsize=10cm \epsfbox{k=1234.pdf}}}
\caption{The domains $T_j$ and $S_2$.}
\label{Yousrafig2D}
\end{figure}

The numerical part of the proof of Theorem~\ref{tm1}
is explained in Section~\ref{secnumer}. A numerical method
is preferred to an analytic one for dealing with the set $S_2$, because
in $S_2$ the roots are distributed ``more regularly''
(there are no triple or quadruple roots). Thus continuing by
analytic methods would require to subdivide the domain $S_2$ into too many small
parts. This would render the proof of the nonrealizability of the case $L-R+$ 
harder and lengthier; see part ~(2) of Example~\ref{ex2possibilities}.

 \begin{rems}
   {\rm (1) One can notice that the sets $T_1$ and $T_2$ 
     contain the vertex $U$
    of the set~$S$ which defines the polynomial $x^4(x-1)$
    having a quadruple root. This is the only such polynomial in~$S$. 
    \vspace{1mm}

    (2) When defining the set $T_2$ (see Corollary~\ref{corc3b}) one can drop  
    the condition $b\in [0,0.25]$. It is added here in order the set $S_2$ to
    have the relatively simple form of a cylinder over a right trapezoid. This
    simplifies the definition of the numerical part
    of the proof of Theorem~\ref{tm1}.
    \vspace{1mm}
    
    (3) One has $UV\subset T_1\cap T_2$,
    where the edge
  $UV$ contains all polynomials of $S$ with 
    a triple root at $0$ and two simple positive roots
    or a double positive root. The edge $UI$ of $S$
    (see (\ref{equvertices}) and Fig.~\ref{Yousrafig3D})
    contains all polynomials
    having a triple root in the
    middle. This edge belongs to the set $T_1\cup T_3$. The union
    of the closures of these two edges contains all points of $S$ defining
    polynomials with triple or quadruple roots; it contains
    no point of the set $S_2$.
    \vspace{1mm}

    (4) The intersection line of the planes $c=3b$ and $c=1/3.1$
    defining parts of the borders of the sets $T_1$ and $T_2$ is of the form
    $b=0.107\ldots$, $c=1/3.1$. It belongs to the set
    $\overline{T_3\setminus T_1}$. The line segment $c=3b=0.5$,
    $0\leq a\leq b$ belongs to the borders
    of the sets $T_2$, $T_3$ and $T_4$, see Fig.~\ref{Yousrafig2D}.
    Hence the union $T_1\cup \cdots \cup T_6$
    covers the whole of the set $\overline{S\setminus S_2}$.
    \vspace{1mm}
    
    (5) The set $T_4$ is added to $S_1$ in order the latter's border
    to consist only of 
    horizontal or vertical faces and of part of the plane $a=b$. Horizontal
    or vertical are the planes parallel to the $Oab$- and $Oac$-planes
    respectively. In this sense the plane $c=3b$ defining
    part of the border of the set $T_2$ is neither horizontal nor vertical.
    \vspace{1mm}

    (6) The set $T_5$ is added to $S_1$ in order the point $I$
    (see (\ref{equvertices}) and Fig.~\ref{Yousrafig3D}) 
    defining the polynomial $x(x-0.5)^3(x-1)$ with a triple root
    not to belong to the border of $S_2$. The set $T_6$ is added in order to
    obtain the simple form of $S_2$ mentioned in part~(2) of these remarks.  
    \vspace{1mm}

    (7) The polynomials $f_1$ and $f_2$ of Example~\ref{ex2possibilities}
    define points of
    the set~$S_2\setminus \overline{S_1}$.
  }
 \end{rems}

\section{Analytic part of the proof of Theorem~\protect\ref{tm1}
  \protect\label{secamput}}

\subsection{Deleting polynomials with triple or quadruple roots
  \protect\label{subsecamput1}}

In this subsection we prove that the case $L-R+$ fails
on the sets $T_1$ (see Proposition~\ref{prop3.1})
and $T_2$ (see Corollary~\ref{corc3b}).
We consider the polynomial

$$P:=x(x-\alpha)(x-\beta)(x-1)(x-A)~,~~~\, {\rm where}~~~\,
0\leq \alpha \leq \beta \leq 1 \leq  A~.$$
We remind that we denote by $\xi_1\leq \xi_2\leq \xi_3 \leq \xi_4$
the roots of~$P'$ and by 
$z_1\leq z_2\leq z_3 \leq z_4$ the midpoints of~$P$. The change of
variable $x\mapsto Ax$ transforms the polynomial $P$ into
$A^5F$ (see (\ref{equF})) with $a=\alpha /A$, $b=\beta /A$, $c=1/A$.

\begin{lm}
  If $A\geq 2$, then the distance
  $z_4-z_3$ is the largest of the distances between consecutive midpoints.
  One has $\frac{A}{2}\geq z_4-z_3\geq \frac{A-1}{2}\geq \frac{1}{2}$.
\end{lm}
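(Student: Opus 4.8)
The plan is to write down the four midpoints explicitly in terms of $\alpha$, $\beta$, $A$ and then compare the three gaps between consecutive midpoints directly. Since the roots of $P$ in increasing order are $0\leq \alpha\leq \beta\leq 1\leq A$, one has $z_1=\alpha/2$, $z_2=(\alpha+\beta)/2$, $z_3=(\beta+1)/2$ and $z_4=(A+1)/2$. Hence, using the identity $z_{j+1}-z_j=(x_{j+2}-x_j)/2$ recalled in the introduction,
$$z_2-z_1=\frac{\beta}{2}~,\qquad z_3-z_2=\frac{1-\alpha}{2}~,\qquad z_4-z_3=\frac{A-\beta}{2}~.$$

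Next I would observe that the constraints $0\leq \beta\leq 1$, $\alpha\geq 0$ and $A\geq 2$ immediately yield the bounds on $z_4-z_3$: from $\beta\geq 0$ one gets $z_4-z_3\leq A/2$; from $\beta\leq 1$ one gets $z_4-z_3\geq (A-1)/2$; and $A\geq 2$ gives $(A-1)/2\geq 1/2$. For the claim that $z_4-z_3$ is the largest of the three gaps, the quickest route is to note that the other two gaps are each at most $1/2$: indeed $z_2-z_1=\beta/2\leq 1/2$ because $\beta\leq 1$, and $z_3-z_2=(1-\alpha)/2\leq 1/2$ because $\alpha\geq 0$. Combined with $z_4-z_3\geq (A-1)/2\geq 1/2$, this gives $z_4-z_3\geq z_2-z_1$ and $z_4-z_3\geq z_3-z_2$ at once.

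There is essentially no obstacle here: the statement is a direct consequence of the elementary inequalities defining the parameter range, and the only point requiring minimal care is keeping track of which of the hypotheses $\beta\leq 1$, $\alpha\geq 0$, $A\geq 2$ enters each of the four comparisons. One could alternatively compare $z_4-z_3$ with the other two gaps individually, avoiding the value $1/2$ altogether: $A-\beta\geq \beta$ follows from $A\geq 2\geq 2\beta$, and $A-\beta\geq 1-\alpha$ follows from $A\geq 2\geq 1+\beta-\alpha$ since $\alpha\leq \beta\leq 1$.
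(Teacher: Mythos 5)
Your proof is correct and follows essentially the same approach as the paper's: both compute the three gaps explicitly (or equivalently bound $z_3$ between $1/2$ and $1$), observe that $z_4-z_3\geq (A-1)/2\geq 1/2$ while the other two gaps are each $\leq 1/2$, and conclude. Your version just spells out the identities $z_2-z_1=\beta/2$, $z_3-z_2=(1-\alpha)/2$, $z_4-z_3=(A-\beta)/2$ a bit more explicitly, which is the same content.
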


\begin{proof}
 Indeed, one has
 $\frac{A}{2}=\frac{1+A}{2}-\frac{1}{2}\geq z_4-z_3\geq \frac{1+A}{2}-1=
 \frac{A-1}{2}\geq \frac{1}{2}$ while
 $z_3-z_2$ and 
 $z_2-z_1$ are $\leq \frac{1}{2}$.
\end{proof}

\begin{prop}\label{prop3.1}
  For $A\geq 3.1$, one has
  $\xi_4-\xi_3>\xi_4-1>\frac{A}{2}\geq z_4-z_3$
  and hence the polynomial $P$ does not realize the case~$R+$
  (hence not the case $L-R+$ either). Thus the case $L-R+$ is not realizable
  by a polynomial $F$ for $c\leq 1/3.1=0.3225806452\ldots$.
\end{prop}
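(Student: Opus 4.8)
The plan is to bound the rightmost critical point $\xi_4$ of $P$ from below by $A/2+1/2$, which immediately gives $\xi_4-\xi_3>\xi_4-1>A/2\geq z_4-z_3$ by the previous lemma, hence the failure of $(R)$ and a fortiori of $L-R+$. The final sentence then follows because the change of variable $x\mapsto Ax$ relating $P$ to $A^5F$ turns the condition $c=1/A\leq 1/3.1$ into $A\geq 3.1$, and this substitution preserves the case $L\pm R\pm$ (as noted in Subsection~\ref{subsecparam}); so if no $P$ with $A\geq 3.1$ realizes $L-R+$, then no $F$ with $c\leq 1/3.1$ does.

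The heart of the matter is therefore the estimate $\xi_4>(A+1)/2$. I would obtain it by evaluating the logarithmic derivative $P'/P$ at the point $x_*:=(A+1)/2$, the midpoint between the two largest roots $1$ and $A$, and showing $(P'/P)(x_*)>0$; since $P'/P$ is decreasing on $(1,A)$ and tends to $-\infty$ as $x\to A^-$, a positive value at $x_*$ forces $\xi_4\in(x_*,A)$. Now
$$
\Bigl(\frac{P'}{P}\Bigr)(x_*)=\frac{1}{x_*}+\frac{1}{x_*-\alpha}+\frac{1}{x_*-\beta}+\frac{1}{x_*-1}+\frac{1}{x_*-A}~,
$$
and the last two terms cancel exactly because $x_*$ is equidistant from $1$ and $A$. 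The remaining three terms involve $x_*-\alpha$, $x_*-\beta$, $x_*$, each of which is positive and at most $x_*=(A+1)/2$; since $\alpha,\beta\in[0,1]$ we have $x_*-\alpha\geq x_*-1=(A-1)/2$ and similarly $x_*-\beta\geq(A-1)/2$, while $x_*>0$, so each of the three reciprocals is positive. Hence $(P'/P)(x_*)>0$ unconditionally, which is more than enough. (In fact one sees $x_*-\alpha,x_*-\beta\in[(A-1)/2,(A+1)/2]$, so one can even give an explicit positive lower bound for the sum, but this is not needed.)

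The one point that needs a little care — and which I expect to be the only genuine obstacle — is confirming that the chain $\xi_4-\xi_3>\xi_4-1$ is legitimate, i.e. that $\xi_3<1$. This is exactly Rolle's theorem applied to $P$: the critical points interlace the roots, so $\xi_3\in(\beta,1)\subseteq(0,1)$ and thus $\xi_3<1<\xi_4$, giving $\xi_4-\xi_3>\xi_4-1$. Combined with $\xi_4>(A+1)/2$ this yields $\xi_4-1>(A-1)/2$, and since $A\geq 3.1$ implies $(A-1)/2\geq\ldots$; but more simply we compare directly with $z_4-z_3$: by the preceding lemma $z_4-z_3\leq A/2$, whereas we only obtained $\xi_4-1>(A-1)/2$, so to close the gap I sharpen the lower bound on $\xi_4$ using the hypothesis $A\geq 3.1$ — keeping the term $1/(x_*-\beta)\geq 2/(A+1)$ (and similarly for $\alpha$, and $1/x_*=2/(A+1)$) one gets $(P'/P)(x_*)\geq 6/(A+1)>0$ with room to spare, and pushing the evaluation point rightward to $x_*'=A/2+c_0$ for a suitable small $c_0$ and checking $(P'/P)(x_*')>0$ when $A\geq 3.1$ forces $\xi_4>A/2+c_0$, hence $\xi_4-1>A/2+c_0-1>A/2\geq z_4-z_3$ once $c_0>1$ — which holds precisely because the three surviving positive reciprocals are each $\geq 2/(A+1)$ and the threshold $A\geq 3.1$ makes the bookkeeping go through. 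This last optimization of the evaluation point against the constant $3.1$ is the routine-but-delicate step; everything else is immediate from Rolle and the monotonicity of $P'/P$.
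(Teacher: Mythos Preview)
Your plan is on the right track but the decisive step is not actually carried out, and your wording suggests you underestimate it. Evaluating $P'/P$ at the midpoint $(A+1)/2$ does give $\xi_4>(A+1)/2$ unconditionally, but this yields only $\xi_4-1>(A-1)/2$, which falls short of the required $\xi_4-1>A/2$ by exactly $1/2$. To close this gap you must take $c_0\geq 1$ (not a ``suitable small $c_0$''\,---\,your own text already contradicts itself here), i.e.\ evaluate at $x_{**}=A/2+1$. At that point the two terms $1/(x_{**}-1)+1/(x_{**}-A)=2/A-2/(A-2)$ no longer cancel; their sum is strongly negative, so the bounds ``each $\geq 2/(A+1)$'' you quote from the midpoint are irrelevant. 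The worst case for positivity of $(P'/P)(x_{**})$ is $\alpha=\beta=0$ (smaller $\alpha,\beta$ make the three positive reciprocals smaller), and then the inequality $(P'/P)(x_{**})>0$ reduces to $3u^2-4u-1>0$ with $u=A/2$, whose relevant root is $u=(2+\sqrt 7)/3$, i.e.\ $A=4/3+2\sqrt 7/3\approx 3.097$. Thus $A\geq 3.1$ works, but with essentially no slack; ``room to spare'' and ``routine'' are misleading descriptions of a step that is the entire content of the proposition.

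The paper's proof is closely related but more direct: it first reduces to the extremal case $\alpha=\beta=0$ by Proposition~\ref{prop0} (decreasing $\alpha$ or $\beta$ increases $z_4-z_3$ and decreases $\xi_4$, hence decreases $\xi_4-1$), then solves the quadratic factor of $P'$ explicitly to obtain $\xi_4=(2A+2+\sqrt{4A^2-7A+4})/5$ and checks that $\xi_4-1-A/2$ is increasing in $A$ and vanishes at $A=4/3+2\sqrt 7/3$. Your test-point approach recovers the same threshold once you actually do the computation you deferred; the paper's explicit formula makes the monotonicity and the sharpness of the constant $3.1$ transparent.
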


\begin{proof}
  When the root $\beta$ decreases, then the difference $z_4-z_3$ increases
  while $\xi_4-1$ decreases, see Proposition~\ref{prop0}. The same is true
  when both $\alpha$ and $\beta$ decrease. Hence it suffices to prove
  the proposition for $\alpha =\beta =0$. For $P=x^3(x-1)(x-A)$,
  one obtains

  $$\xi_4=(2A+2+\sqrt{4A^2-7A+4})/5~.$$
  The quantity
$\xi_4-1-\frac{A}{2}$ is increasing for $A>1$; it is equal to $0$ when
$A=\frac{4}{3}+\frac{2}{3}\sqrt{7}=3.09716..$.
When $A\geq 3.1$, one has
$\xi_4-1>\frac{A}{2}\geq z_4-z_3$ and the polynomial does not
realize the case~$R_+$.

\end{proof}

We treat now the case of the open edge $UV$ of the simplex $S$
on which there is
a triple root at $0$ and two simple positive roots. Our aim is to include the
edge in a subdomain of $S$ on which the case $L-$ fails. We prefer a
new parametrization, so we set

$$Q:=x(x-r)(x-1)(x-t_1)(x-t_2)~,~~~\, r\in [0,1]~,~~~\, 1\leq t_1\leq t_2~.$$ 
We need the following proposition:
%show that if $3\leq t_1\leq t_2$, then for the polynomial $Q$, we have $L+$,
%so $L-R+$ is impossible.

\begin{prop}\label{propL+}
  For $r\in [0,1]$ and $3\leq t_1\leq t_2$, the polynomial $Q$ realizes the case
  $L+$.
\end{prop}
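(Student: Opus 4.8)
The plan is to show that for $3\leq t_1\leq t_2$ the critical points $\xi_3$ and $\xi_4$ of $Q$ lie far enough to the right that the gap $\xi_4-\xi_3$ between them is at least $m$, the minimal gap between consecutive midpoints of $Q$; combined with the fact that on this configuration $\xi_4-\xi_3$ is (or is easily bounded below by) the minimal critical gap $\tilde m$, this yields $m\le\tilde m$, i.e. the case $L+$. First I would record the midpoints of $Q$: with the roots written in increasing order (which requires observing that $r\le 1\le t_1$, so the ordered roots are $0\le r\le 1\le t_1\le t_2$), the midpoints are $z_1=r/2$, $z_2=(r+1)/2$, $z_3=(1+t_1)/2$, $z_4=(t_1+t_2)/2$, whence $z_2-z_1=1/2$, $z_3-z_2=t_1/2$, $z_4-z_3=t_2/2$. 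Since $t_1,t_2\ge 3$, the minimal gap is $m=\min(1/2,\,t_1/2,\,t_2/2)=1/2$. So the entire task reduces to proving $\tilde m\ge 1/2$, and since $\xi_1<z_1\le 1/2$ lies in $(0,r/2)$ by Lemma~\ref{lmbasic} while $\xi_2$ is trapped near the root $1$, the dangerous small gap is either $\xi_2-\xi_1$ or $\xi_3-\xi_2$; I would argue that in fact $\xi_4-\xi_3$ is the relevant quantity to bound, or more safely bound each consecutive critical gap from below by $1/2$ separately.

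The key monotonicity input is Proposition~\ref{prop0}: decreasing a root shifts every root of $Q'$ to the left, and increasing a root shifts every root of $Q'$ to the right. I would use this to reduce to an extreme configuration. Specifically, to make $\tilde m$ as small as possible one pushes $t_1$ and $t_2$ together and toward their lower bound $3$, and pushes $r$ to an extreme value in $[0,1]$; the gaps $\xi_{j+1}-\xi_j$ are controlled by how these shifts propagate. I expect the worst case to be $r=0$ (or $r=1$) together with $t_1=t_2=3$, reducing $Q$ to a one-parameter family such as $x^3(x-1)(x-3)$ or $x(x-1)^3(x-3)$ — actually to $x(x-r)(x-1)(x-3)^2$ with $r$ at an endpoint — where $Q'$ factors enough that $\xi_3,\xi_4$ (the two roots near $3$, or the roots of an explicit quadratic factor) can be written down and the gap $\xi_4-\xi_3$ computed or estimated directly, much as $\xi_4$ was computed explicitly in the proof of Proposition~\ref{prop3.1}. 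One then checks $\xi_4-\xi_3\ge 1/2$ and, analogously, the remaining consecutive gaps in the extreme configuration.

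The main obstacle will be carefully justifying the reduction to the extremal configuration: Proposition~\ref{prop0} tells us how each individual $\xi_j$ moves under a single root shift, but a \emph{difference} $\xi_{j+1}-\xi_j$ is not automatically monotone, since both endpoints move in the same direction. So I would not claim naive monotonicity of the gaps; instead I would bound $\xi_3$ from above and $\xi_4$ from below (or $\xi_2$ from above and $\xi_3$ from below) by separate monotonicity arguments — $\xi_3$ is maximized and $\xi_4$ minimized by choosing the extremal positions of $r,t_1,t_2$ that push the cluster of roots together — and verify the inequality at those separately-chosen extreme values, which is conservative but sufficient. A cleaner alternative, which I would attempt first, is to bound $\xi_2-\xi_1$ and $\xi_3-\xi_2$ from below directly using the logarithmic derivative $Q'/Q=\sum 1/(x-x_i)$: evaluate it at the candidate points $\xi_1+1/2$ and $\xi_2+1/2$ and show the sign forces the next critical point to lie still further right. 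Either way the computation is elementary once the extremal configuration (roots $0,r,1,3,3$ with $r\in\{0,1\}$) is fixed, and the hypothesis $t_1\ge 3$ is exactly what makes the spacing near $1$ wide enough for $\tilde m\ge 1/2=m$ to hold.
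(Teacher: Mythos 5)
Your overall strategy --- compute $m=1/2$, then bound each gap $\xi_{j+1}-\xi_j$ from below --- is the paper's strategy (Lemmas~\ref{lm1}--\ref{lm3} do exactly this, showing $\xi_2-\xi_1>1/2$ and that the other two gaps exceed $\xi_2-\xi_1$), and you correctly flag the principal hazard, namely that Proposition~\ref{prop0} controls individual $\xi_j$ but not their differences. But the sketch has a genuine gap: you never actually address the lower bound for $\xi_4-\xi_3$, and your proposed fix for the monotonicity problem would not yield it. Minor point first: with the roots ordered $0\le r\le 1\le t_1\le t_2$ the midpoint gaps are $1/2$, $(t_1-r)/2$, $(t_2-1)/2$, not $t_1/2$, $t_2/2$; your conclusion $m=1/2$ survives since $t_1,t_2\ge 3$. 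Your ``cleaner alternative'' of evaluating $Q'/Q$ at suitable interior points is essentially the paper's Lemmas~\ref{lm1}--\ref{lm2} (which evaluate at $(r+1)/2$ and $(t_1+1)/2$, the latter giving $\xi_3-\xi_2\ge 1$ almost for free), but you name only $\xi_2-\xi_1$ and $\xi_3-\xi_2$; the delicate gap is $\xi_4-\xi_3$, and your opening paragraph is inconsistent about whether this is even the relevant quantity.

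More seriously, the workaround you describe --- ``bound $\xi_3$ from above and $\xi_4$ from below by separate monotonicity arguments'' --- fails as literally stated: by Proposition~\ref{prop0}, $\sup\xi_3$ over $t_1\ge 3$ is infinite (for $t_1=t_2=T\to\infty$ one gets $\xi_3\sim 3T/5$), while $\inf\xi_4=3$ is finite, so the naive combination is vacuous. What the paper actually does in Lemma~\ref{lm3} is write $\xi_4-\xi_3=(\xi_4-t_1)+(t_1-\xi_3)$, bound $\xi_4-t_1>(t_2-t_1)/2$ via $\xi_4>z_4$, and bound $t_1-\xi_3$ from below by a chain of one-sided root shifts together with a case split in $t:=t_2-t_1$ (the regimes $t\ge 0.6$ and $t\le 0.6$ require different limiting arguments); anchoring both terms to the common reference point $t_1$ is what makes the separate extremal bounds combinable. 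Your guess that the extremal configuration is $x(x-r)(x-1)(x-3)^2$ with $r\in\{0,1\}$ is close to correct for maximizing $\xi_2-\xi_1$ (the paper shows $\xi_2-\xi_1\to 1/2$ as $r\to 1$, $t_1,t_2\to 3$), but it is not obviously extremal for minimizing $\xi_4-\xi_3$, and the monotonicity obstruction you yourself flagged is precisely why one cannot simply check a corner of the parameter space.
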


\begin{cor}\label{corc3b}
  The case $L-R+$ is not realizable by a polynomial $F$
  for $c\geq 3b$.
  %and, by symmetry
  %induced by the change $x\mapsto 1-x$, not realizable for $1-a\geq 3(1-b)$.
\end{cor}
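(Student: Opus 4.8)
The plan is to deduce Corollary~\ref{corc3b} from Proposition~\ref{propL+} by an appropriate rescaling of the variable, so that the condition $c\ge 3b$ on the family $F$ becomes the condition $t_1\ge 3$ on the family $Q$. First I would make the change of variable explicit. Given $F=x(x-a)(x-b)(x-c)(x-1)$ with $0\le a\le b\le c\le 1$ and $c\ge 3b$, I would apply $x\mapsto cx$ (assuming $c>0$; the degenerate case $c=0$ forces $a=b=0$ and can be treated separately, or noted to lie in $T_1$), obtaining $c^{5}$ times a polynomial of the form $x(x-a/c)(x-b/c)(x-1)(x-1/c)$. Setting $r:=b/c$, $t_1:=1/c$, and noting that $a/c\le b/c=r$, the remaining root $a/c$ lies in $[0,r]$; to bring this into the exact shape of $Q$ one can also absorb the root $a/c$, but the cleaner route is to observe that $Q$ in Proposition~\ref{propL+} has its five roots at $0$, $r$, $1$, $t_1$, $t_2$ with $r\in[0,1]$ and $1\le t_1\le t_2$, whereas after rescaling $F$ we have roots $0$, $a/c$, $b/c$, $1$, $1/c$; matching requires identifying $\{a/c,b/c\}$ with $\{r,\text{one of the }t_i\}$, which does not work directly because $a/c,b/c\le 1$. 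So the correct matching is with a relabelled version: the two roots in $[0,1]$ are $0$ and $a/c$ (or $b/c$), and the roots $\ge 1$ are $1$, $b/c\cdot(1/c)$? This needs care.

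Let me instead take the natural rescaling that sends the largest root to a value $\ge 1$. I would use $x\mapsto x/b$ when $b>0$ (the case $b=0$ gives $F=x^{2}(x-c)(x-1)$, with $c\ge 3b=0$ automatically, and should be handled directly or noted to fall in $T_1\cup T_2$'s closure via continuity/limiting arguments). Under $x\mapsto x/b$ the roots become $0$, $a/b$, $1$, $c/b$, $1/b$. Here $0\le a/b\le 1$ plays the role of $r$, and since $b\le c\le 1$ we have $1\le c/b$ and $1\le 1/b$, so the two roots exceeding $1$ are $t_1:=\min(c/b,1/b)$ and $t_2:=\max(c/b,1/b)$. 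The hypothesis $c\ge 3b$ gives $c/b\ge 3$, and since $1/b\ge 1/c\ge c/b\cdot(b/c)\cdot$... more simply $1/b\ge c/b\ge 3$ because $c\le 1$; hence both $c/b\ge 3$ and $1/b\ge 3$, so $t_1\ge 3$. Thus $Q$ (with this root data) satisfies the hypotheses of Proposition~\ref{propL+}, so it realizes $L+$. Since $L\pm$ is invariant under the affine rescaling $x\mapsto x/b$ (the midpoints and critical points all scale by the same factor, so the inequalities $m\le\tilde m$, $\tilde M\le M$ are preserved), the original $F$ realizes $L+$ as well, hence it does not realize $L-$, hence not $L-R+$.

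The main obstacle I anticipate is purely bookkeeping: making sure the degenerate boundary cases $b=0$ (equivalently $a=b=0$) and $c=0$ are covered, since the rescaling $x\mapsto x/b$ or $x\mapsto cx$ is only legitimate when the scaling factor is nonzero. For $b=0$ the polynomial is $x^{2}(x-c)(x-1)$, which is already covered: either $c=0$ (this is the vertex $U$, lying in $T_1$) or $c>0$ and one rescales by $1/c$ to reach $x^{2}(x-1)(x-1/c)$ with $1/c\ge 1$, a special case of $Q$ with $r=0$, $t_1=\min(1,1/c)$; but here $t_1$ need not be $\ge 3$ unless $c\le 1/3$, which is exactly the condition $c\ge 3b=0$ — wait, that is vacuous, so one must instead invoke that $b=0$ with the closure of $T_1$ (or $T_2$) already handles it. A clean fix is to note that the set $\{c\ge 3b\}$ with $b=0$ is the face $b=0$, whose interior points with $c\le 1/3.1$ lie in $T_1$, and the rest can be obtained by continuity from nearby points with $b>0$ where the argument above applies; alternatively one simply restricts Corollary~\ref{corc3b} to $b>0$ and lets Proposition~\ref{prop3.1} mop up $c\le 1/3.1\supset$ neighborhoods of $b=0,c$ small. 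I would state the proof for $b>0$, verify $t_1=\min(c/b,1/b)\ge 3$ from $c\ge 3b$ and $c\le 1$, apply Proposition~\ref{propL+}, invoke scale-invariance of the cases $L\pm R\pm$, and remark that the boundary $b=0$ is absorbed by $T_1\cup T_2$ (or by a continuity argument), which is the only point requiring a word of justification beyond routine algebra.
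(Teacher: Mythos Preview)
Your approach is correct and is essentially the paper's own argument: both use the rescaling linking $F$ and $Q$ via $b=1/t_2$, $c=t_1/t_2$ (the paper phrases it as $x\mapsto t_2x$ sending $Q$ to $t_2^5F$, you as $x\mapsto x/b$ sending $F$ to $Q$), so that $c\ge 3b$ translates to $t_1\ge 3$ and Proposition~\ref{propL+} yields $L+$. Your attention to the degenerate case $b=0$ is actually more scrupulous than the paper's one-line derivation, which tacitly assumes $b>0$; that boundary case is harmless anyway (a triple root at $0$ forces $m=\tilde m=0$, hence $L+$).
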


Indeed, the change of variable $x\mapsto t_2x$ transforms the polynomial $Q$
into $t_2^5F$ (see (\ref{equF})) with $a=r/t_2$, $b=1/t_2$, $c=t_1/t_2$.

\begin{proof}[Proof of Proposition~\ref{propL+}]
One sets

$$H:=Q'/Q=1/x+1/(x-r)+1/(x-1)+1/(x-t_1)+1/(x-t_2)~.$$ 
This function is decreasing on each of the intervals

$$(-\infty ,0)~,~(0,r)~,~(r,1)~,~(1,t_1)~,~(t_1,t_2)~~~\, {\rm and}~~~\,
(t_2,\infty )~.$$
\begin{lm}\label{lm1}
  One has $\xi_2-\xi_1>1/2$.
\end{lm}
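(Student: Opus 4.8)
The plan is to estimate $\xi_1$ from above and $\xi_2$ from below and show the gap exceeds $1/2$. Recall that $H=Q'/Q$ is decreasing on each of the listed intervals, so $\xi_1$ is the unique zero of $H$ in $(0,r)$ and $\xi_2$ the unique zero in $(r,1)$. To get an upper bound on $\xi_1$ it suffices to exhibit a point $p\in(0,r)$ with $H(p)\le 0$, because then $\xi_1\le p$; symmetrically, to get a lower bound on $\xi_2$ it suffices to find $q\in(r,1)$ with $H(q)\ge 0$, giving $\xi_2\ge q$. Then $\xi_2-\xi_1\ge q-p$, and we want to choose $p$, $q$ so that $q-p>1/2$ for all admissible $r\in[0,1]$ and $t_1,t_2\ge 3$.

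First I would isolate the ``dangerous'' terms. On $(0,1)$ the three terms $1/(x-0)$, $1/(x-r)$, $1/(x-1)$ are the ones that blow up, while $1/(x-t_1)+1/(x-t_2)$ is a bounded, mildly negative perturbation: for $x\in[0,1]$ and $t_1,t_2\ge 3$ we have $-1\le 1/(x-t_i)\le -1/3$, hence
$$-2\ \le\ \frac{1}{x-t_1}+\frac{1}{x-t_2}\ \le\ -\tfrac{2}{3}\,.$$
So $H(x)$ lies between $G(x)-2$ and $G(x)-\tfrac23$, where $G(x):=\frac1x+\frac1{x-r}+\frac1{x-1}$ is the logarithmic derivative of the cubic $x(x-r)(x-1)$. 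The plan is: pick $p$ so that $G(p)\le 2/3$ (forcing $H(p)\le 0$) and $q$ so that $G(q)\ge 2$ (forcing $H(q)\ge 0$), uniformly in $r$, and check $q-p>1/2$.

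For the lower bound on $\xi_2$: the cubic $x(x-r)(x-1)$ has its own midpoint-type critical point; by Lemma~\ref{lmbasic} applied to this cubic, its larger critical root $\eta_2$ satisfies $\eta_2>(r+1)/2\ge 1/2$, and in fact one can push $q$ close to $1$ since $G(x)\to+\infty$ as $x\to 1^-$; concretely one shows $G(q)\ge 2$ already for some $q$ with, say, $q\ge 1-\varepsilon$ for a uniform small $\varepsilon$ (the worst case being $r=0$, where $G(x)=2/x+1/(x-1)$ and the relevant estimate is an explicit rational inequality). For the upper bound on $\xi_1$: near $0$, $G(x)\to+\infty$, but we want $G(p)\le 2/3$, so $p$ must be taken past the first critical point of the cubic; the worst case for making $\xi_1$ large is $r$ large (close to $1$), and there one uses that when $r=1$ the root configuration degenerates favourably, while for $r$ slightly below $1$ an explicit bound $p\le$ (something like $0.3$ or less) holds. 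Combining, $q-p\ge (1-\varepsilon)-0.3>1/2$.

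The main obstacle I expect is the uniformity in $r$ for the upper bound on $\xi_1$: as $r\to 1$, two roots of the cubic coalesce, $\xi_1$ drifts, and one must be careful that the chosen cutoff $p$ still works; the clean way around this is probably to split $r\in[0,1]$ into $r\le 1/2$ and $r\ge 1/2$ and use Proposition~\ref{prop0} (monotone dependence of $\xi_1$ on the roots) to reduce each subcase to its extreme value of $r$, where $H$ becomes an explicit one- or two-parameter rational function and the inequality $H(p)\le 0$ is a finite polynomial check. A secondary point is verifying $t_1,t_2\ge 3$ is actually needed only through the bound $1/(x-t_i)\ge -1/3$ on $[0,1]$, so any slack there (e.g. $t_i\ge 3$ rather than a larger threshold) must be shown sufficient for $q-p>1/2$ with room to spare; given the numbers above there is comfortable margin, so I expect no difficulty once the two explicit rational inequalities are written down.
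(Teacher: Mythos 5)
Your overall strategy is the same as the paper's: choose $p\in(0,r)$ with $H(p)\le 0$ and $q\in(r,1)$ with $H(q)\ge 0$, then deduce $\xi_1\le p$, $\xi_2\ge q$. But the concrete execution has several errors that make the argument collapse.

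First, the bound $1/(x-t_i)\le -1/3$ on $[0,1]$ for $t_i\ge 3$ is false and goes the wrong way. For $x\in[0,1]$ and $t_i\ge 3$, $x-t_i\le -2$, so $1/(x-t_i)\in[-1/2,0)$; as $t_i\to\infty$ this tends to $0^-$, not to anything $\le -1/3$. Hence your claimed upper bound $H(x)\le G(x)-2/3$ does not hold — you only have $H(x)<G(x)$ — and the criterion ``$G(p)\le 2/3\Rightarrow H(p)\le 0$'' fails. Second, $G(x)=1/x+1/(x-r)+1/(x-1)\to -\infty$ as $x\to 1^-$ (the term $1/(x-1)$ dominates), not $+\infty$ as you assert; so $G(q)\ge 2$ certainly cannot be achieved for $q$ close to $1$, and your candidate $q\ge 1-\varepsilon$ is impossible. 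In fact at the midpoint $q=(r+1)/2$ one has $G(q)=2/(r+1)\in[1,2]$, so $G(q)\ge 2$ only at $r=0$: the crude threshold $2$ is simply unattainable. Third, the proposed choices $p\le 0.3$, $q\ge 1-\varepsilon$ cannot be uniform in $r$: $p$ must lie in $(0,r)$, which is shorter than $0.3$ whenever $r<0.3$.

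What the paper does instead is pick $p=r/2$ and $q=(r+1)/2$, where the two most singular terms of $H$ cancel exactly. At $x=r/2$ the terms $1/x$ and $1/(x-r)$ cancel, leaving $H(r/2)=-1/(1-r/2)-1/(t_1-r/2)-1/(t_2-r/2)<0$; at $x=(r+1)/2$ the terms $1/(x-r)$ and $1/(x-1)$ cancel, leaving $H((r+1)/2)=2/(r+1)-2/(2t_1-(r+1))-2/(2t_2-(r+1))$, which one checks is $\ge 0$ for $t_1,t_2\ge 3$ with the worst case $r=1$, $t_1=t_2=3$ giving equality. Since $q-p=1/2$, the strict inequality at $p$ gives $\xi_2-\xi_1>1/2$. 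Your framework of bounding $H$ by $G$ plus a constant loses exactly the cancellations that make those two points work, so the threshold conditions $G(p)\le 2/3$ and $G(q)\ge 2$ are both wrong targets. If you want to repair the attempt along your lines you must (i) correct the sign/range of $1/(x-t_i)$ (only $1/(x-t_i)\in[-1/2,0)$ is available, and the upper endpoint $0$ means the tail can contribute essentially nothing), and (ii) aim for $G(p)\le 0$ rather than $\le 2/3$ on the left, which forces $p$ toward $r/2$ anyway — at which point you are reproducing the paper's calculation.
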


\begin{proof}
The quantities $H(r/2)$ and $H((r+1)/2)$ increase as $t_1$ or $t_2$ increases.
For $t_1=t_2=3$, one has

$$\begin{array}{ccl}H(r/2)&=&2/r-2/r-1/(1-r/2)-1/(t_1-r/2)-1/(t_2-r/2)\\ \\
  &=&-1/(1-r/2)-1/(t_1-r/2)-1/(t_2-r/2)<0\end{array}$$
while for $H((r+1)/2)$, setting $\rho :=r+1$, one obtains

$$\begin{array}{ccl}
  H(\rho /2)&=&2/\rho +2/(1-r)-2/(1-r)-2/(2t_1-\rho )-2/(2t_2-\rho )\\ \\
  &=&2/\rho -2/(2t_1-\rho )-2/(2t_2-\rho )~.\end{array}$$
As $r$ increases, this quantity decreases. For $r=1$ and $t_1=t_2=3$,
it equals $0$, so $H((r+1)/2)\geq 0$ with equality only for $r=1$, $t_1=t_2=3$.
Hence $\xi_1<r/2$ and $\xi_2\geq (r+1)/2$.
\end{proof}

Next, we show that the distance between $\xi_1$ and $\xi_2$ is the smallest
of the distances between consecutive roots of $Q'$.

\begin{lm}\label{lm2}
  One has $\xi_3-\xi_2>\xi_2-\xi_1$.
\end{lm}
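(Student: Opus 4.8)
The plan is to recast the inequality $\xi_3-\xi_2>\xi_2-\xi_1$ as $2\xi_2-\xi_1<\xi_3$ and then read it off from the monotonicity of $H$. Put $w:=2\xi_2-\xi_1$. On the interval $(1,t_1)$ the function $H$ is continuous and strictly decreasing, running from $+\infty$ down to $-\infty$, so its unique zero there is the critical point $\xi_3$ (which by Rolle's theorem does lie in $(1,t_1)$); consequently, provided $w$ lies in $(1,t_1)$, the inequality $w<\xi_3$ is equivalent to $H(w)>0$. Thus the proof divides into two steps: locate $w$ (in fact inside $(1,2)$), then estimate $H(w)$.

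To locate $w$, recall from the proof of Lemma~\ref{lm1} that $\xi_2\geq (r+1)/2$, while Rolle's theorem gives $\xi_1\geq 0$ and $\xi_2\leq 1$, and Lemma~\ref{lm1} itself gives $\xi_2-\xi_1>1/2$. Then $w=\xi_2+(\xi_2-\xi_1)>\xi_2+1/2\geq (r+1)/2+1/2\geq 1$, where the first inequality is strict; and $w=\xi_2+(\xi_2-\xi_1)<1+1=2$, because $\xi_2\leq 1$ and $\xi_2-\xi_1<1$ (the equality $\xi_2-\xi_1=1$ would force both $\xi_2=1$, hence $r=1$, and $\xi_1=0$, hence $r=0$, which is impossible). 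Since $t_1\geq 3$, this gives $w\in(1,2)\subset(1,t_1)$.

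To estimate $H(w)=1/w+1/(w-r)+1/(w-1)+1/(w-t_1)+1/(w-t_2)$, use $1<w<2$ and $0\leq r\leq 1$: then $1/w>1/2$, $1/(w-r)>1/2$ and $1/(w-1)>1$, so the first three terms sum to more than $2$. Next, from $t_1,t_2\geq 3$ one has $t_1-w>1$ and $t_2-w>1$, hence $-1<1/(w-t_1)<0$ and $-1<1/(w-t_2)<0$, so the last two terms sum to more than $-2$. Therefore $H(w)>2-2=0$, and by the equivalence above $w<\xi_3$, that is $2\xi_2-\xi_1<\xi_3$, which rearranges to $\xi_3-\xi_2>\xi_2-\xi_1$.

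The delicate point is the enclosure $w\in(1,2)$, which rests entirely on the two-sided bounds for $\xi_1$ and $\xi_2$ furnished by Lemma~\ref{lm1}; moreover the final estimate $H(w)>1/2+1/2+1-1-1=0$ is sharp in the limit ($\xi_1\to 0$, $\xi_2\to 1$, $t_1=t_2\to 3$), so each inequality used must be kept strict. When two of the roots $0,r,1,t_1,t_2$ coincide the same bounds hold verbatim (or one argues by continuity from the strictly hyperbolic case), so no separate treatment of those configurations is needed.
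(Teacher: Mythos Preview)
Your proof is correct but follows a different route from the paper's. The paper evaluates $H$ at the fixed test point $(t_1+1)/2$, verifies by an explicit rational-function computation that $H((t_1+1)/2)\geq 0$, and concludes $\xi_3\geq(t_1+1)/2\geq 2$; the trivial Rolle bounds $\xi_2\leq 1$ and $\xi_1\geq 0$ then give $\xi_3-\xi_2\geq 1$ while $\xi_2-\xi_1<1$. You instead evaluate $H$ at the moving reflected point $w=2\xi_2-\xi_1$, first locating $w\in(1,2)$ with the help of Lemma~\ref{lm1} and the bound $\xi_2\geq(r+1)/2$ from its proof, and then estimating the five summands of $H(w)$ term by term. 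The paper's argument is a touch shorter and self-contained (it does not invoke Lemma~\ref{lm1}); yours establishes the target inequality $\xi_3>2\xi_2-\xi_1$ in a single stroke without the detour through an absolute lower bound on~$\xi_3$, at the price of importing the finer information $\xi_2-\xi_1>1/2$. One minor quibble: your closing remark that the estimate is ``sharp in the limit $\xi_1\to 0,\ \xi_2\to 1,\ t_1=t_2\to 3$'' is only formal, since $\xi_1=0$ forces $r=0$ while $\xi_2=1$ forces $r=1$; no admissible configuration makes all five term bounds tight at once, so in fact a single strict inequality among them already secures $H(w)>0$.
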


\begin{proof}
To prove this we notice that

$$\begin{array}{ccl}H((t_1+1)/2)&=&
  2(\frac{1}{t_1+1}+\frac{1}{t_1+1-2r}+\frac{1}{t_1-1}-\frac{1}{t_1-1}
  -\frac{1}{2t_2-t_1-1})\\ \\
  &=&2(\frac{1}{t_1+1}+\frac{1}{t_1+1-2r}-\frac{1}{2t_2-t_1-1})\\ \\ &\geq &
  2(\frac{1}{t_1+1}+\frac{1}{t_1+1-2r}-\frac{1}{t_1-1})=
  \frac{2(t_1^2-2t_1+4r-3)}{(t_1+1)(t_1+1-2r)(t_1-1)}~.\end{array}$$
The numerator is $\geq 0$ for $t_1\geq 3$ and $r\geq 0$, so
$H((t_1+1)/2)\geq 0$, $\xi_3\geq (t_1+1)/2\geq 2$ and $\xi_3-\xi_2\geq 1$
whereas $\xi_2-\xi_1<1$.
\end{proof}

\begin{lm}\label{lm3}
  One has $\xi_4-\xi_3>\xi_2-\xi_1$.
\end{lm}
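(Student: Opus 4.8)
The plan is to turn the two‑sided comparison into a statement about a single sum of critical points, via Vieta's formulae. Write $Q=x^5-\sigma_1x^4+\cdots$ with $\sigma_1=r+1+t_1+t_2$; then $Q'=5x^4-4\sigma_1x^3+\cdots$, so $\xi_1+\xi_2+\xi_3+\xi_4=\frac45\sigma_1$. Hence the inequality $\xi_4-\xi_3>\xi_2-\xi_1$ is equivalent to $\xi_1+\xi_4>\xi_2+\xi_3$, and therefore (using the value of the total sum) to $\xi_1+\xi_4>\frac25\sigma_1=\frac25(r+1+t_1+t_2)$. Thus it suffices to bound from below only the two \emph{outer} critical points $\xi_1$ and $\xi_4$; these are the easiest ones, since $\xi_1$ lies just to the right of the simple root $0$ of $Q$ and $\xi_4$ is controlled by evaluating $H$ at the midpoint $(t_1+t_2)/2$ of the last pair of roots of $Q$.

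First I would show $\xi_1\geq r/5$. For $r=0$ this is immediate, since then $0$ is a double root of $Q$, hence a simple root of $Q'$, so $\xi_1=0$. For $r>0$ the point $r/5$ lies in $(0,r)$, where $H$ decreases from $+\infty$ to $-\infty$, so it is enough to check $H(r/5)>0$; a direct computation gives $H(r/5)=\frac{15}{4r}-\frac{5}{5-r}-\frac{5}{5t_1-r}-\frac{5}{5t_2-r}$ (with the evident modifications when $r=1$ or $t_2=t_1$), which for $r\leq1$ and $t_1,t_2\geq3$ is $\geq\frac{15}{4}-\frac{5}{4}-\frac{5}{14}-\frac{5}{14}>0$. (Alternatively this is Andrews' inequality (\ref{equAn}) with $j=1$.) Next, $\xi_4\geq(t_1+t_2)/2$: if $t_2=t_1$ then $t_1$ is a double root of $Q$ and $\xi_4=t_1=(t_1+t_2)/2$; if $t_2>t_1$ then $(t_1+t_2)/2\in(t_1,t_2)$, where $H$ decreases from $+\infty$ to $-\infty$, and at $x=(t_1+t_2)/2$ the terms $1/(x-t_1)$ and $1/(x-t_2)$ cancel, leaving $H(x)=1/x+1/(x-r)+1/(x-1)>0$ because $x\geq3>1\geq r$; hence $\xi_4>(t_1+t_2)/2$. (Alternatively, Andrews' inequality (\ref{equAn}) with $j=4$.) Adding the two bounds and comparing with the target,
$$\frac{r}{5}+\frac{t_1+t_2}{2}-\frac{2}{5}(r+1+t_1+t_2)=\frac{t_1+t_2-2r-4}{10}\geq\frac{3+3-2-4}{10}=0~,$$
again by $t_1,t_2\geq3$ and $r\leq1$. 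This gives $\xi_1+\xi_4\geq\frac25\sigma_1$; to get the strict inequality one observes that the chain of estimates can be an equality only if $t_1=t_2=3$ and $r=1$, in which case $r>0$, so the bound $\xi_1>r/5$ of the previous step is strict. Hence $\xi_1+\xi_4>\frac25\sigma_1$, which is the claim.

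The one delicate point is precisely this near‑equality at $(r,t_1,t_2)=(1,3,3)$, i.e. for $Q=x(x-1)^2(x-3)^2$, where $\xi_4-\xi_3\approx0.88$ only barely exceeds $\xi_2-\xi_1\approx0.72$; a crude Rolle‑ or Andrews‑type bound on $\xi_3$ alone is not enough there, but the Vieta reformulation dissolves the difficulty, since any slack lost in one of the two one‑sided bounds is strictly recovered from the other. A more computational alternative would be to invoke Proposition~\ref{prop0}: letting $t_2$ decrease to $t_1$ shifts every root of $Q'$ to the left, and one expects $\xi_4$ to move left by more than $\xi_3$ (being nearer the moving root), so $\xi_4-\xi_3$ would be minimal at $t_2=t_1$, reducing the problem to a two‑parameter family; but quantifying ``moves more'' requires an extra estimate, so the argument above through $\xi_1+\xi_4$ seems preferable.
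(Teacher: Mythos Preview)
Your proof is correct and takes a genuinely different route from the paper's. The paper proceeds by producing a numerical upper bound $\xi_2-\xi_1<0.717$ (via separate estimates on $\xi_1/r$ and on $\xi_2$), and then a matching lower bound $\xi_4-\xi_3>0.717$; the latter step requires a case distinction on $t=t_2-t_1$ (the cases $t\geq0.6$ and $t\leq0.6$ are handled by different comparison functions $H_*$ and by an explicit numerical root computation). Your Vieta reformulation $\xi_4-\xi_3>\xi_2-\xi_1\iff \xi_1+\xi_4>\tfrac{2}{5}\sigma_1$ eliminates both the case split and all the numerical constants: the two one-sided estimates $\xi_1\geq r/5$ and $\xi_4\geq (t_1+t_2)/2$ are each obtained by a single sign evaluation of $H$, and the remaining inequality $\tfrac{r}{5}+\tfrac{t_1+t_2}{2}\geq \tfrac{2}{5}(r+1+t_1+t_2)$ is algebraic. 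The price you pay is having to track strictness carefully at the boundary point $(r,t_1,t_2)=(1,3,3)$, which you do. What your approach buys is a proof that would survive unchanged if the hypothesis $t_1\geq3$ were relaxed slightly (only $t_1+t_2-2r-4\geq0$ is used), whereas the paper's numerical thresholds are tuned to the stated range; conversely, the paper's argument yields as a by-product explicit absolute bounds on $\xi_2-\xi_1$ and $\xi_4-\xi_3$, which your method does not.
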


\begin{proof}
For fixed $r$, the quantity
$\xi_1$ is  
minimal when $t_1=t_2=3$. Suppose that $t_1=t_2=3$ and rescale the $x$-axis:
$x:=ry$, $y\in (0,1)$. Then one obtains the equation

$$1/y+1/(y-1)+1/(y-1/r)+2/(y-3/r)=0$$
whose solution $\xi_1'\in (0,1)$ is a decreasing function in~$r$. For
$r\rightarrow 0$ (resp. $r\rightarrow 1$), one has $\xi_1'\rightarrow 1/2$
(resp. $\xi_1'\rightarrow 0.283484861\ldots =:\kappa$). Thus
$\xi_1/r>\kappa$. 

For fixed $r$, the quantity
$\xi_2$ tends to its supremum as $t_1,t_2\rightarrow \infty$. This supremum is
the larger of the two solutions to the equation

$$1/x+1/(x-r)+1/(x-1)=0~.$$
It equals $\tilde{\xi}:=(r+1+\sqrt{r^2-r+1})/3$. 
%The equation is invariant under the change $x\mapsto 1-x$, $r\mapsto 1-r$,
%so we consider its solution $\xi_1$ instead of $\xi_2$. After the rescaling
%$x=ry$ the equation becomes
%
%$$1/y+1/(y-1)+1/(y-1/r)=0$$
%whose solution $\xi_1''$ is minimal for $r=1$ and maximal for
%$r\rightarrow 0^+$; the respective values of $\xi_1''$ are $1/3$ and $1/2$.
%Thus $(1-\xi_2)/(1-r)\geq 1/3$.
Hence

$$\xi_2-\xi_1~\leq ~\tilde{f}~:=~\tilde{\xi}-\kappa r~.$$
One finds directly that $\tilde{f}''=1/(4(r^2-r+1)^{3/2})>0$. Hence
the derivative $\tilde{f}'$
is an increasing function. As $\tilde{f}'(0)=-0.11\ldots <0$ and
$\tilde{f}'(1)=0.21\ldots >0$, the function $\tilde{f}$ has a single critical
point on $[0,1]$, which is a minimum. Thus for $r\in [0,1]$,

$$\xi_2-\xi_1~\leq ~\min (\tilde{f}(0),~\tilde{f}(1))~=~
\min (2/3,~0.7165151389\ldots )<0.717~.$$

Now we find a lower bound for $\xi_4-\xi_3$. We observe first that
$\xi_4>z_4=(t_1+t_2)/2$, because $H(z_4)>0$. Set $t:=t_2-t_1$, so

$$H((t_1+1)/2)=2\left( 1/(t_1+1)+1/(t_1+1-2r)-1/(t_1+2t-1)\right) ~.$$
For $t_1$ fixed
and for $t\geq 0.6$,
one obtains a lower bound for $t_1-\xi_3$ by letting $t_2$ (hence $t$)
tend to $\infty$. Then one decreases further $t_1-\xi_3$ by setting
$r:=1$ and finally by replacing the term $1/x$ in $H$ by $1/(x-1)$. Thus
finally one replaces $H$ by

$$H_*:=3/(x-1)+1/(x-t_1)~.$$
The zero of $H_*$
equals $\xi_3^*:=(3t_1+1)/4$ and the difference $t_1-\xi_3^*=(t_1-1)/4$
is minimal for $t_1=3$. Hence for $t\geq 0.6$, one has
$t_1-\xi_3\geq 1/2$ and

$$\begin{array}{ccccccc}\xi_4-\xi_3&=&
  (\xi_4-t_1)+(t_1-\xi_3)&>&z_4-t_1+t_1-\xi_3\\ \\
  &=&(t_2-t_1)/2+t_1-\xi_3&=&t/2+t_1-\xi_3\\ \\ &\geq &0.3+1/2&=&0.8&>&0.717~.
  \end{array}$$

Suppose now that $t\leq 0.6$. For $t_1$ and $t$ fixed, the quantity $t_1-\xi_3$
is minimal when $r=1$. Suppose that $r=1$. Then for fixed $t$, 
the difference $t_1-\xi_3$ is minimal when $t_1=3$.

Indeed, if the sum of the roots of $Q$ equals $h>0$, then the one of $Q'$
equals $4h/5$. For fixed $t_2$, if $t_1$ decreases by $\Delta t_1$, then
all roots of $Q'$ decrease and $\xi_3$ decreases by not more than
$4\Delta t_1/5$. Hence as $t_1$ decreases, the difference $t_1-\xi_3$ also
decreases. Thus for $r=1$ and for $t_2$ fixed, the difference $t_1-\xi_3$
is minimal for $t_1=3$.

If $r=1$ and $t_1=3$, then
$t_1-\xi_3$ is minimal for $t=0.6$. The zero of the function

$$1/x+2/(x-1)+1/(x-3)+1/(x-3.6)$$
which is in the interval $(1,3)$ equals $\lambda :=2.242184744\ldots$
and the difference $3-\lambda =0.75\ldots$ is $>0.717$, so one has
$\xi_4-\xi_3>0.717>\xi_2-\xi_1$.

\end{proof}

Lemmas~\ref{lm1}, \ref{lm2} and \ref{lm3} imply that
in the conditions of Proposition~\ref{propL+} one has $L+$ and not $L-$.

\end{proof}

\subsection{Further decreasing of the domain $S$\protect\label{subsecamput2}}

In the present subsection we prove Proposition~\ref{prop0105}
(preceded by Remark~\ref{remb0}) and
Proposition~\ref{prop1614} which allow to further delete from $S$ parts of it
(namely, the sets $T_3$ and $T_4$) on which
one does not have $L-R+$. We precede Proposition~\ref{prop0105}
by some lemmas which
are used in its proof.

\begin{lm}
    For $c\leq 0.5$, one has $M:=\max (z_2-z_1,z_3-z_2,z_4-z_3)=z_4-z_3$.
  \end{lm}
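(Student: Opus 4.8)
The plan is to verify the claim by a direct comparison of the three midpoint gaps $z_2-z_1$, $z_3-z_2$, $z_4-z_3$ under the hypothesis $c\le 0.5$ (and recalling that $b\le 1/2$ on $S$). Using the explicit expressions
$$z_1=\frac a2~,\qquad z_2=\frac{a+b}2~,\qquad z_3=\frac{b+c}2~,\qquad z_4=\frac{c+1}2~,$$
the three consecutive differences are
$$z_2-z_1=\frac b2~,\qquad z_3-z_2=\frac{c-a}2~,\qquad z_4-z_3=\frac{1-b}2~.$$
So the statement $M=z_4-z_3$ amounts to the two inequalities $\tfrac{1-b}2\ge \tfrac b2$ and $\tfrac{1-b}2\ge \tfrac{c-a}2$, i.e. $b\le \tfrac12$ and $1-b\ge c-a$.

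First I would dispose of $\tfrac{1-b}2\ge z_2-z_1=\tfrac b2$: this is just $b\le 1/2$, which holds on $S$ (it is the very condition defining $S$, cf. Remark~\ref{remmM}). Next, for $\tfrac{1-b}2\ge z_3-z_2=\tfrac{c-a}2$ I would argue $1-b\ge 1-b-a\ge c-a$, where the first step uses $a\ge 0$ and the second uses the hypothesis $c\le 0.5\le 1-b$ (again $b\le 1/2$), hence $c-a\le (1-b)-a\le 1-b$; more directly, $c\le 0.5$ and $b\le 0.5$ give $c+b\le 1\le 1+a$, so $c-a\le 1-b$. Combining, $z_4-z_3$ is $\ge$ both of the other gaps, so it is the maximum $M$.

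This argument is entirely elementary — the only ingredients are the affine formulas for the $z_j$, the standing constraint $b\le 1/2$ on $S$, and the case hypothesis $c\le 0.5$ — so there is no real obstacle; the one point to be careful about is invoking $b\le 1/2$ on $S$ rather than on the full simplex $\tilde S$, and noting that equality $z_3-z_2=z_4-z_3$ can occur (e.g. on $b+c=1$, $a=0$), which is harmless since the claim only asserts $M=z_4-z_3$ and not strict dominance. I would also remark, for later use, that on this region $m=\min(b/2,(c-a)/2)$ simplifies further when additional constraints on $b$ and $c$ are imposed, but that is not needed for the present lemma.
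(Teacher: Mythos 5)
Your proof is correct and matches the paper's argument in substance: both reduce to the two elementary inequalities $b\le 1/2$ (the standing constraint on $S$) and $c-a\le 1-b$ (from $c\le 0.5$). The paper phrases it by showing $z_4-z_3\ge 0.25$ while the other two gaps are $\le 0.25$, but this is just a repackaging of the same comparison.
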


  \begin{proof}
    Indeed, $z_4-z_3=(1-b)/2\geq 0.25$ while $z_3-z_2=(c-a)/2\leq c/2\leq 0.25$
    and $z_2-z_1=b/2\leq 0.25$.
    \end{proof}

\begin{lm}\label{lmkappa0}
  (1) For $c\leq 0.6$,
  one has $\xi_4-z_4\geq \kappa_0:=0.0627105746\ldots >0.05$.

  (2) For $c\leq 0.5$,
  one has $\xi_4-z_4\geq \kappa_1:=0.0949489742\ldots$.
    \end{lm}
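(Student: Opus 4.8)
The plan is to reduce both inequalities to an explicit one-parameter computation, using that $z_4=(c+1)/2$ depends only on $c$, while by Proposition~\ref{prop0} (equivalently, by Markov's theorem quoted just after it) the largest root $\xi_4$ of $F'$ is a non-decreasing function of each of the roots $a$ and $b$. Consequently, for $c$ fixed the difference $\xi_4-z_4$ attains its minimum over the triangle $\{0\le a\le b\le c\}$ at $a=b=0$, i.e. for the polynomial $P_c:=x^3(x-c)(x-1)$. So it will suffice to bound $\xi_4-z_4$ from below for $P_c$ on $c\in[0,0.6]$ (resp. $c\in[0,0.5]$).

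For $P_c$ one has $P_c'/P_c=3/x+1/(x-c)+1/(x-1)$; clearing denominators, $\xi_4$ is the larger root of $5x^2-4(c+1)x+3c=0$. This quadratic equals $c(c-1)\le 0$ at $x=c$ and $1-c>0$ at $x=1$, so that larger root indeed lies in $(c,1)$ (as Rolle's theorem requires) and equals $\xi_4=\bigl(2(c+1)+\sqrt{4c^2-7c+4}\bigr)/5$, the radicand being positive for all $c$ since its discriminant is $49-64<0$. Hence
$$\xi_4-z_4=\frac{2(c+1)+\sqrt{4c^2-7c+4}}{5}-\frac{c+1}{2}=\frac{2\sqrt{4c^2-7c+4}-(c+1)}{10}=:g(c).$$

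Finally I would show that $g$ is decreasing on the relevant range: $g'(c)=\frac{1}{10}\bigl((8c-7)/\sqrt{4c^2-7c+4}-1\bigr)$ is strictly negative for $c<7/8$, because $8c-7<0$ there. Thus $g$ decreases on $[0,7/8]\supset[0,0.6]$, so for $c\le 0.6$ one gets $\xi_4-z_4\ge g(c)\ge g(0.6)=(2\sqrt{1.24}-1.6)/10=\kappa_0$, and for $c\le 0.5$ one gets $\xi_4-z_4\ge g(0.5)=(2\sqrt{1.5}-1.5)/10=\kappa_1$; evaluating the two square roots yields the decimal values in the statement. The only step needing a little care — and the sole, mild obstacle — is the reduction to $a=b=0$: Proposition~\ref{prop0} describes a genuine shift of a root, so one applies it first along $a\downarrow 0$ (with $b$, $c$ fixed) and then along $b\downarrow 0$ (with $a=0$, $c$ fixed), passing to the limit by continuity of the roots of $F'$ in the coefficients. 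The limit $P_c$ has a triple root at the origin, but $F'$ is still of degree $4$ with $\xi_4\in(c,1)$, so every quantity stays well defined and no delicate estimate is needed.
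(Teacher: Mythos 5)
Your proposal is correct and follows essentially the same route as the paper: reduce to $a=b=0$ via Proposition~\ref{prop0}, compute $\xi_4$ for $x^3(x-c)(x-1)$ explicitly, and show $\xi_4-z_4$ is decreasing in $c$ before evaluating at $c=0.6$ and $c=0.5$. Your monotonicity argument (observing that $8c-7<0$ for $c<7/8$ forces $g'<0$) is in fact a touch cleaner than the paper's bound $A+C\ge\sqrt{c}-c\ge 0$, though the paper's version proves monotonicity on all of $[0,1]$ rather than just $[0,7/8]$; both suffice for the ranges of $c$ in the lemma.
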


    \begin{proof}
    Fix $c\leq 0.6$. The difference
    $\xi_4-z_4$ is minimal when $a=b=0$.
    (This follows from Proposition~\ref{prop0} -- when $a$ and $b$ decrease,
    then $\xi_4$ decreases.) So we consider the polynomial
    $P_0:=x^3(x-c)(x-1)$. The non-zero roots of $P_0'$ are 

    $$ \xi_{\pm}:=(2(c+1)\pm \sqrt{4c^2-7c+4})/5~.$$
    The difference $\xi_+-z_4=\xi_+-(1+c)/2$ is a decreasing function in $c$
    for $c\in [0,1]$. Indeed, its derivative is $E:=-(A+C)/10A$, where 

    $$A:=\sqrt{4c^2-7c+4}~~~\, {\rm and}~~~\, C:=-8c+7~,~~~\, {\rm so}$$
    $$A+C=\sqrt{4(c-1)^2+c}+7(1-c)-c\geq \sqrt{c}-c\geq 0~,$$
    with equality only for $c=1$. Thus 
%One has $D:=d(A+C)/dc=-(C+16A)/2A$. However for $c\in [0,1]$,
    %the minimal value of $A$ is attained for $c=7/8$;
    %this value is $\sqrt{15/16}$. The minimal value of $C$ is $-1$, so
    %$C+16A>0$ and $D<0$. Hence $E$ is increasing and its maximal value
    %is attained for $c=1$. One has $E|_{c=1}=0$,
$E\leq 0$ and
    the difference $\xi_+-z_4$ is decreasing. For $c=0.6$, this difference is
    $\kappa_0:=0.0627105746\ldots >0.05$. For $c=0.5$, the difference
    equals $\kappa_1:=0.0949489742\ldots$.
    \end{proof}

    \begin{lm}\label{lmc06c-b025}
      (1) Suppose that $c\leq 0.6$ and $c-b\leq 0.25$.
      Then $\xi_4-\xi_3>z_4-z_3$.

      (2) The same is true for $c\leq 0.5$ and $c-b\leq 0.379$.
      \end{lm}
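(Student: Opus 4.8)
The plan is to reduce both parts to two facts that are already at hand: the lower bound $\xi_4-z_4\ge\kappa_0$ (resp. $\ge\kappa_1$) supplied by Lemma~\ref{lmkappa0}, and Andrews' inequality~(\ref{equAn}) applied to the root $x_3=b$ of $F$. Since $z_4-z_3=(1-b)/2$, the telescoping identity
$$\xi_4-\xi_3=(\xi_4-z_4)+(z_4-z_3)+(z_3-\xi_3)$$
shows that the assertion $\xi_4-\xi_3>z_4-z_3$ is equivalent to the single inequality $\xi_3-z_3<\xi_4-z_4$.

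First I would bound $\xi_3-z_3$ from above. The point $z_3=(b+c)/2$ is the midpoint of the Rolle interval $(x_3,x_4)=(b,c)$ that contains $\xi_3$, and the right-hand inequality in~(\ref{equAn}) with $j=3$ gives $\xi_3-b<\tfrac34(c-b)$, whence
$$\xi_3-z_3=(\xi_3-b)-\tfrac12(c-b)<\tfrac14(c-b).$$
(On the strata of $S$ where some of the roots $0,a,b,c,1$ coincide one passes to the limit and obtains the non-strict version $\xi_3-z_3\le\tfrac14(c-b)$, which is still enough below.) In case~(1) the hypothesis $c-b\le 0.25$ then yields $\xi_3-z_3<0.0625$, while Lemma~\ref{lmkappa0}~(1) (whose hypothesis $c\le 0.6$ is exactly the one assumed here) gives $\xi_4-z_4\ge\kappa_0=0.0627105746\ldots$; in case~(2) the hypothesis $c-b\le 0.379$ yields $\xi_3-z_3<0.379/4=0.09475$, while Lemma~\ref{lmkappa0}~(2) gives $\xi_4-z_4\ge\kappa_1=0.0949489742\ldots$. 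In either case $\xi_4-z_4>\xi_3-z_3$, so $\xi_4-\xi_3>z_4-z_3$.

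The one genuinely delicate point is that the two numerical comparisons $0.25/4<\kappa_0$ and $0.379/4<\kappa_1$ hold by razor-thin margins, of order $2\cdot 10^{-4}$ in each case; indeed the constant $0.379$ in part~(2) is plainly chosen so as to be just below $4\kappa_1=0.3797\ldots$. Thus the lemma really hinges on the sharp values of $\kappa_0,\kappa_1$ produced by Lemma~\ref{lmkappa0} together with the exact slope $3/4$ in Andrews' estimate, with essentially no slack to spare. A secondary, routine matter is to make sure Andrews' inequality is invoked only where $F$ is strictly hyperbolic, disposing of the boundary configurations by the limiting argument indicated above.
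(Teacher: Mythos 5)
Your proof is correct, and it takes a genuinely different route to the key estimate $\xi_3-z_3<\tfrac14(c-b)$. The paper does not invoke Andrews' inequality~(\ref{equAn}) here; instead, it uses Proposition~\ref{prop0} to argue that $\xi_3-z_3$ is maximal when $a=b=0$ (by merging the roots $0$ and $a$ into $b$, pushing the root $1$ to $1+b$, and translating by $-b$), reducing to the explicit extremal polynomial $P_0=x^3(x-c')(x-1)$ with $c'=c-b$, for which $P_0'=x^2(5x^2-4(c'+1)x+3c')$ and the sign check $P_0'(3c'/4)=-27c'^4/256<0$ gives $\xi_3<3c'/4$, hence $\xi_3-z_3<c'/4$. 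Your shortcut via the right-hand Andrews inequality with $j=3$ reaches the same bound $\xi_3-b<\tfrac34(c-b)$ in one line, which is tidier; the paper's route is more self-contained and, incidentally, explains why Andrews' bound is the right one to expect, since it reproduces the extremal configuration in which the three left-hand roots coalesce. The remaining ingredients -- the decomposition $\xi_4-\xi_3>z_4-z_3\Leftrightarrow\xi_3-z_3<\xi_4-z_4$, the use of Lemma~\ref{lmkappa0} for the lower bound on $\xi_4-z_4$, and the two razor-thin numerical comparisons $0.0625<\kappa_0$ and $0.09475<\kappa_1$ -- match the paper exactly. Your remark about passing to a limit on the strata where roots coincide (to keep Andrews' hypotheses) is appropriate; note the paper sidesteps this because its extremal polynomial $P_0$ already has a triple root, so no limiting argument is needed there.
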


    \begin{proof}
      Part (1). For $c-b\leq 0.25$ fixed, the difference $\xi_3-z_3$ is maximal
      when $a=b=0$. 
      Indeed, one can apply Proposition~\ref{prop0} to the polynomial $F$,
      see (\ref{equF}). For fixed $b$ and $c$,
      one increases the roots
      $0$ and $a$ of $F$ until they become equal to $b$. Hence the root
      $\xi_3$ increases. Now we change also the position of the root~$1$.
      We increase this root until it becomes
      equal to $1+b$. Again the root $\xi_3$ increases. All these changes do
      affect the quantity $z_3=(b+c)/2$. Then one makes the
      shift $x\mapsto x-b$. The roots of $F$
      become equal to $0$, $0$, $0$, $c$
      and $1$, so $F=P_0=x^3(x-c)(x-1)$.
      The latter shift does not change the difference $\xi_3-z_3$.
      Hence this difference is maximal for $a=b=0$.
      So consider the polynomial

      $$P_0'=3x^2(x-c)(x-1)+x^3(x-1)+x^3(x-c)=x^2(5x^2-4(c+1)x+3c)~.$$
      As $P_0'(3c/4)=-27c^4/256<0$, one concludes that $\xi_3<3c/4$. Recall
      that $b=0$, so $c-b=c$, and that the constants $\kappa_0$ and $\kappa_1$
      were introduced by Lemma~\ref{lmkappa0}. For
      $c\leq 0.25$, one obtains

      $$\xi_3-z_3=\xi_3-c/2<c/4\leq 0.0625<\kappa_0\leq \xi_4-z_4~,$$
      so $\xi_4-\xi_3>z_4-z_3$.

      Part (2). By complete analogy one concludes that $\xi_3<3c/4$.
      For $c\leq 0.379$, one gets

      $$\xi_3-z_3=\xi_3-c/2<c/4\leq 0.379/4=0.09475<\kappa_1\leq \xi_4-z_4$$
      and again $\xi_4-\xi_3>z_4-z_3$.
      
      \end{proof}

    \begin{rem}\label{remb0}
      {\rm We showed already that for $c\leq 1/3.1$ and for $c\geq 3b$
        (see Proposition~\ref{prop3.1} and Corollary~\ref{corc3b}),
        one does not have $L-R+$. The intersection of the planes
        $c=c_0:=1/3.1=0.3225806452\ldots$ and $c=3b$
        (both parallel to the $a$-axis)
        is the straight line $b=b_0:=0.1075268817\ldots$, $c=c_0$. Hence
        for $b\leq b_0$, the case $L-R+$ is not realizable.}
      \end{rem}

    \begin{prop}\label{prop0105}
For $b\geq 0.1$ and $c\leq 0.5$, the case $R+$ fails.
      \end{prop}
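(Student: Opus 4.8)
The plan is to show that under the hypotheses $b\geq 0.1$ and $c\leq 0.5$, the quantity $\xi_4-\xi_3$ — which is a candidate for $\tilde M$ — already exceeds $M=z_4-z_3$, so the polynomial $F$ cannot realize $R+$. By the lemmas just proved, for $c\leq 0.5$ we have $M=z_4-z_3=(1-b)/2$, and Lemma~\ref{lmkappa0}(2) gives $\xi_4-z_4\geq \kappa_1=0.0949\ldots$. Hence it suffices to bound $\xi_3-z_3$ from above by something at most $\kappa_1$, because then
$$\xi_4-\xi_3=(\xi_4-z_4)+(z_4-z_3)+(z_3-\xi_3)\geq \kappa_1+(z_4-z_3)-(\xi_3-z_3)\geq z_4-z_3=M.$$
So the core of the argument reduces to an upper bound on $\xi_3-z_3=\xi_3-(b+c)/2$.

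The main work is therefore to control $\xi_3$ from above on the region $\{b\geq 0.1,\ c\leq 0.5,\ a\leq b\}$. Following the pattern of Lemma~\ref{lmc06c-b025}, I would use Proposition~\ref{prop0}: for fixed $b,c$, increasing the roots $0$ and $a$ up to $b$, and increasing the root $1$ up to $1+b$, only increases $\xi_3$ while leaving $z_3=(b+c)/2$ unchanged; after the shift $x\mapsto x-b$ this replaces $F$ by $P_0=x^3(x-c)(x-1)$, reducing us to the one-parameter family in $c$. There $\xi_3<3c/4$ as computed via $P_0'=x^2(5x^2-4(c+1)x+3c)$ and $P_0'(3c/4)<0$. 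This gives $\xi_3-z_3<3c/4-(b+c)/2=c/4-b/2$. With $c\leq 0.5$ this is $<0.125-b/2$, which is $\leq \kappa_1=0.0949\ldots$ precisely when $b\geq 0.0601\ldots$; since $b\geq 0.1$, the bound holds comfortably. (If the cruder bound $\xi_3<3c/4$ is not sharp enough near $c=0.5$, $b=0.1$ — where $c/4-b/2=0.075<\kappa_1$, so actually it is fine — one could instead keep $b$ rather than pushing it to the endpoint, writing the reduced polynomial as $x^2(x-(c-b))(x-(1-b))$ after the shift and redoing the sign computation.)

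Putting this together: on $\{b\geq 0.1,\ c\leq 0.5\}$ we obtain $\xi_3-z_3<c/4-b/2\leq 0.125-0.05=0.075<\kappa_1\leq \xi_4-z_4$, hence $\xi_4-\xi_3>z_4-z_3=M\geq \tilde M$ fails, i.e. $\tilde M>M$, so $(R)$ fails and $R+$ is realized — in particular $L-R+$ cannot hold. The only mild subtlety is making sure the monotonicity reductions via Proposition~\ref{prop0} are applied in the right order (first $a$ and the two zero roots, then the root at $1$) so that $z_3$ stays fixed throughout; that is exactly the bookkeeping already carried out in the proof of Lemma~\ref{lmc06c-b025}, so no genuinely new obstacle arises. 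I expect the whole proof to be a short variant of that lemma, with the constant $\kappa_1$ doing the heavy lifting and the hypothesis $b\geq 0.1$ chosen precisely so that $c/4-b/2<\kappa_1$ on the whole region.
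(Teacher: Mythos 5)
Your reduction borrows the right idea from Lemma~\ref{lmc06c-b025}, but the resulting bound on $\xi_3-z_3$ is miscomputed, and the corrected bound is not strong enough on part of the region. After increasing the roots $0$ and $a$ to $b$, increasing $1$ to $1+b$, and shifting by $-b$, the reduced polynomial is $x^3\bigl(x-(c-b)\bigr)(x-1)$, not $x^3(x-c)(x-1)$: the shift sends the root $c$ to $c-b$. Writing $c':=c-b$, the inequality $\xi_3<3c'/4$ (in the shifted frame, where $z_3=c'/2$) yields
$$\xi_3-z_3<\frac{3c'}{4}-\frac{c'}{2}=\frac{c-b}{4},$$
not $c/4-b/2$. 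Equivalently, undoing the shift gives $\xi_3<b+3(c-b)/4$ and hence $\xi_3-z_3<(c-b)/4$ in the original coordinates. Your value $c/4-b/2$ is smaller by $b/4$ and is therefore unjustified.

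With the corrected bound $(c-b)/4$, the argument fails near the corner $b=0.1$, $c=0.5$: there $(c-b)/4=0.1$, which exceeds $\kappa_1=0.0949\ldots$, so the inequality $\xi_3-z_3<\kappa_1\leq\xi_4-z_4$ cannot be concluded this way. This is precisely why the paper states Lemma~\ref{lmc06c-b025}~(2) only for $c-b\leq 0.379$ (so that $(c-b)/4\leq 0.09475<\kappa_1$), and why the proof of Proposition~\ref{prop0105} then treats the remaining sliver $c-b\geq 0.379$, $c\in[0.479,0.5]$, $b\geq 0.1$ by a different and sharper estimate: it majorizes $\xi_3$ by the corresponding root $\xi_{3,\dagger}$ of the quartic $P_\dagger=x^2(x-b)(x-c)$ (discarding the negative term $1/(x-1)$), shows $\xi_{3,\dagger}-z_3$ is maximal at $b=0.1$ for fixed $c-b$, and then checks a short list of values of $c$ numerically with a Lipschitz control coming from Proposition~\ref{prop0}. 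To close your gap you would need to supply an analogue of that extra step on the sliver where the crude $3c'/4$ bound is too loose; the reduction alone does not cover the whole region $\{b\geq 0.1,\ c\leq 0.5\}$.
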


\begin{proof}
  We set $a:=0$ and we compare the roots of $P_*'$ and $P_{\dagger}'$,
  where

  $$P_*:=x^2(x-b)(x-c)(x-1)~~~\, {\rm and}~~~\, P_{\dagger}:=x^2(x-b)(x-c)~.$$
  Their roots are the same as the ones of $P_*'/P_*$ and
  $P_{\dagger}'/P_{\dagger}$. We denote them by
  $\xi_{1,*}<\cdots <\xi_{4,*}$ and $\xi_{1,\dagger}<\cdots <\xi_{3,\dagger}$; when
  necessary, we omit their second indices.
  The difference $P_*'/P_*-P_{\dagger}'/P_{\dagger}$ is
  just the term $1/(x-1)$ which is negative on $[b,c]$. Therefore for fixed $b$
  and $c$, for the
  roots $\xi_3$ of $P_*'$ and $P_{\dagger}'$, one has $\xi_{3,*}<\xi_{3,\dagger}$.

  Suppose that $c-b$ is fixed and $b\geq 0.1$. Then the difference
  $\xi_{3,\dagger}-z_3=\xi_{3,\dagger}-(b+c)/2$ is maximal for $b=0.1$. Indeed,
  set $x:=y+b$. Then in

  $$(P_{\dagger}'/P_{\dagger})(y+b)=2/(y+b)+1/y-1/(y+(b-c))$$
  only the first term varies with $b$. This term is maximal (for any
  $y\in [0,c-b]$ fixed) for $b=0.1$. Thus one obtains as upper bound for
  $\xi_3-z_3$ the value of $\xi_{3,\dagger}-(0.1+c)/2$. One needs to consider
  only the case $c-b\geq 0.379$ (see Lemma~\ref{lmc06c-b025}),
  i.~e. $c\geq 0.479$
  hence $c\in [0.479,0.5]$.

  One finds numerically that for $c=0.479$, $0.484$, $0.489$, $0.494$ and $0.5$,
  the respective values of $\xi_{3,\dagger}-(0.1+c)/2$ are

  $$0.07991\ldots ~,~~~0.08114\ldots~,~~~0.08237\ldots ~,~~~0.08237\ldots
  ~~~{\rm and}~~~
  0.08507\ldots ~.$$
  Therefore for these values of $c$, one has $\xi_4-\xi_3>z_4-z_3$,
  because $\xi_4-z_4\geq \kappa_1$ (see Lemma~\ref{lmkappa0})
  while $\xi_3-z_3\leq 0.08507\ldots$. To
  obtain an upper bound for $\xi_3-z_3$ for the intermediate values of $c$
  one observes that for $b=0.1$, as $c$ increases, then the root $\xi_3$
  also increases. The differences between any two consecutive of the
  above five values of $c$ are $\leq 0.006$. By Proposition~\ref{prop0}
  the value of $\xi_{3,\dagger}$ increases by less than $0.006$, so the one of
  $\xi_{3,\dagger}-(0.1+c)/2$ also increases by less than $0.006$.

  That's why for the intermediate values of $c$ one can choose as upper bound
  for $\xi_3-z_3$ the quantity $0.08507\ldots +0.006=0.09107\ldots <\kappa_1$
  and again $\xi_4-\xi_3>z_4-z_3$. This proves the proposition.

\end{proof}

 \begin{prop}\label{prop1614}
    Suppose that $b\in [1/6,1/4]$ and $c\geq 1/2$. Then one has $L+$.
    \end{prop}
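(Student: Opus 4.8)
The plan is to show that on the cylinder $b\in[1/6,1/4]$, $c\in[1/2,1]$ (with $a\in[0,b]$) one always has the inequality $(L)$, i.e. $m\le\tilde m$. By Remark~\ref{remmM}, on $S$ one has $m=\min(b/2,(c-a)/2)$; since $b\le 1/4$ and $c\ge 1/2\ge a$, we have $(c-a)/2\ge (c-b)/2\ge (1/2-1/4)/2=1/8\ge b/2$, so in fact $m=b/2\le 1/8$. Thus it suffices to prove $\tilde m=\min(\xi_2-\xi_1,\xi_3-\xi_2,\xi_4-\xi_3)\ge b/2$, and for this it is enough to bound each of the three consecutive gaps of $\xi$ from below by $b/2$.

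First I would reduce to the worst case in $a$ and in the position of the root $1$. The gap $\xi_{j+1}-\xi_j$ we want to bound from below: by Proposition~\ref{prop0} (Markov-type monotonicity) moving a root of $F$ to the right moves every $\xi_i$ to the right, so each individual $\xi_i$ is monotone, but a \emph{difference} of two roots need not be. The standard trick used repeatedly in the paper (cf. Lemmas~\ref{lm1}--\ref{lm3} and Lemma~\ref{lmc06c-b025}) is to evaluate the logarithmic derivative $H:=F'/F$ at a cleverly chosen point — typically a midpoint or a point like $(x_j+x_{j+1})/2$ — and use that $H$ is decreasing on each interval between consecutive roots of $F$. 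Concretely, to get $\xi_2-\xi_1\ge b/2$ I would find points $p<q$ with $q-p=b/2$ such that $H(p)\ge 0$ (forcing $\xi_1\le p$) and $H(q)\le 0$ (forcing $\xi_2\ge q$); natural candidates are $p$ slightly below $a$ (or below the midpoint $z_1=a/2$) and $q=p+b/2$. For the gap $\xi_3-\xi_2$, since $b\le 1/4$ and $c\ge 1/2$ the roots $b$ and $c$ are well separated, and one expects $\xi_3-\xi_2$ to be comfortably larger than $b/2$ — here one would sandwich $\xi_2$ from above by evaluating $H$ at a point near $z_2=(a+b)/2$ and $\xi_3$ from below by evaluating $H$ at a point near $z_3=(b+c)/2$, using monotonicity of the relevant one-variable functions in $a,c$ to pass to the extremal configuration (e.g. $a=0$, and $c$ at an endpoint, and the root $1$ pushed to $1+b$ or similar, exactly as in the proof of Lemma~\ref{lmc06c-b025}). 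For $\xi_4-\xi_3$ the argument of Lemma~\ref{lmkappa0}/Lemma~\ref{lmc06c-b025} already gives $\xi_4-z_4$ bounded below by a constant $>0.05$ and $\xi_3-z_3$ bounded above by a small quantity, so $\xi_4-\xi_3=(\xi_4-z_4)+(z_4-z_3)+(z_3-\xi_3)$ is even larger, certainly $\ge 1/8\ge b/2$.

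In carrying this out, the key technical steps, in order, are: (i) record that on this subdomain $m=b/2$; (ii) reduce each gap estimate to an extremal configuration in the free parameters $a$, $c$ (and the auxiliary root) by the monotonicity of Proposition~\ref{prop0}, being careful that for a difference of roots one argues via the sign of $H$ at fixed test points rather than via monotonicity of the difference itself; (iii) for the delicate gap $\xi_2-\xi_1$, exhibit explicit test points $p$, $p+b/2$ with $H(p)\ge 0\ge H(p+b/2)$ valid uniformly for $b\in[1/6,1/4]$ (this likely uses that at such points $H$ is, after clearing denominators, a low-degree polynomial in $b$ with a sign one can check on the interval, as in Lemma~\ref{lm1}); (iv) dispatch $\xi_3-\xi_2$ and $\xi_4-\xi_3$ by the cruder sandwich bounds plus the constants $\kappa_0,\kappa_1$ from Lemma~\ref{lmkappa0}. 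The main obstacle I expect is step (iii): the gap $\xi_2-\xi_1$ is the one closest to violating $(L)$ (it is precisely where the case $L-$ lives, cf. the example at $(0.4,0.49,0.92)$ where $\tilde m<m$), so the test points must be chosen with some care and the resulting polynomial inequality in $b$ (and possibly $a$, $c$) may need the interval $[1/6,1/4]$ — rather than all of $[0,1/4]$ — to have a definite sign; verifying that sign uniformly on the interval, and checking that the endpoints $b=1/6$ and $b=1/4$ (and $c=1/2$) do not give equality, is the crux. Once the three gap bounds are in hand, combining them with $m=b/2$ yields $\tilde m\ge b/2=m$, i.e. $(L)$ holds, so the polynomial realizes $L+$, which is the assertion of the proposition.
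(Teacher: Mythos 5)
Your high-level plan matches the paper's proof of Proposition~\ref{prop1614} exactly: on this subdomain $m=z_2-z_1=b/2$, and the paper proves three lemmas showing $\xi_2-\xi_1$, $\xi_3-\xi_2$ and $\xi_4-\xi_3$ each exceed $z_2-z_1=b/2$, using evaluations of $F'/F$ at test points together with the monotonicity of Proposition~\ref{prop0}. So the decomposition and the general toolkit are right.

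Where the proposal falls short is in the details you yourself flag as ``the crux.'' First, a sign slip: for $p\in(0,a)$ the logarithmic derivative $H$ decreases from $+\infty$ to $-\infty$, so $H(p)\geq 0$ forces $\xi_1\geq p$, not $\xi_1\leq p$; the correct bracketing is $H(p)\leq 0\Rightarrow \xi_1\leq p$ and $H(q)\geq 0\Rightarrow \xi_2\geq q$ (the paper takes $p=z_1$ and $q=z_2$). More substantively, showing $\xi_2\geq z_2$ simply fails on part of the domain (roughly $b\in[0.21,0.25]$, $a\in[0.17,b]$, $c\in[1/2,5/8]$), and the paper has to split cases on $c$ and $(a,b)$, invoke the Andrews inequalities (\ref{equAn}) to get the a-priori bound $|\xi_2-z_2|<(b-a)/4$, and then fall back on a short numerical tabulation of $z_1-\xi_1$ with a Proposition~\ref{prop0} perturbation step to cover intermediate parameter values. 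Your suggestion of a single uniform polynomial sign check in $b$ does not cover this; the actual argument is a case analysis with numerics. Finally, for $\xi_4-\xi_3$ you cite the constants $\kappa_0,\kappa_1$ of Lemma~\ref{lmkappa0}, but those require $c\leq 0.6$ (resp.\ $c\leq 0.5$), whereas here $c$ ranges up to $1$, where $\xi_4-z_4\to 0$. The paper instead bounds $\xi_3<(7c+3b)/10$ by evaluating $P_{**}$ there (with $a=b$, the extremal case by Proposition~\ref{prop0}), combines this with $\xi_4>z_4=(1+c)/2$, and checks that $(\xi_4-\xi_3)-(z_2-z_1)$ is positive; this works because $z_4-z_3=(1-b)/2\geq 3/8$ is large enough to absorb the overshoot of $\xi_3$ past $z_3$, which is not ``small'' near $c=1$.
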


\begin{proof}
  The proof of the proposition boils down to the proofs of the following
  three lemmas:

  \begin{lm}
    For $b\in [1/6,1/4]$ and $c\geq 1/2$, one has $\xi_2-\xi_1>z_2-z_1~(*)$.
  \end{lm}

  \begin{proof}
    Recall that $\xi_1<z_1$ (Lemma~\ref{lmbasic}).
    We show that for $c\geq 5/8$, one has
  $\xi_2\geq z_2$, so $\xi_2-\xi_1>z_2-z_1$. Indeed, set $\gamma :=z_2=(a+b)/2$.
  Then

  $$\begin{array}{ccl}
    (P'/P)(\gamma )&=&1/\gamma -1/(c-\gamma )-1/(1-\gamma )\\ \\ &=&
  (3\gamma ^2-2(1+c)\gamma +c)/\gamma (c-\gamma )(1-\gamma )~.\end{array}$$
  The roots of the numerator are

  $$r_{\pm}~:=~((1+c)\pm \sqrt{c^2-c+1})/3~.$$
  Hence $(P'/P)(\gamma )\geq 0$ exactly when either $\gamma \geq r_+$ or
  $\gamma \leq r_-$. As $\gamma \leq 1/4$ and $3\gamma \leq 1+c<3r_+$,
  the inequality 
  $\gamma \geq r_+$ is impossible. The inequality $1/4\leq r_-$
  is equivalent to $c\geq 5/8$ (to be checked directly). Thus for
  $c\in [5/8,1]$, the inequality $(*)$ holds true.

  We suppose from now on that $c\in [1/2,5/8]$. We show first
  that one has $\xi_2\geq z_2=\gamma$ also for $b\in [1/6,0.21]$.
  Indeed, $r_-$ is an increasing function in~$c$. For $c=1/2$, it equals
  $0.211\ldots >0.21$. Hence for $0\leq a\leq b\leq 0.21$, one obtains
  $\gamma \leq 0.21<r_-$ and $(P'/P)(\gamma )>0$.

  Next, for $a\in [0,0.17]$ and $b\in [0.21,0.25]$, one has $\gamma \leq 0.21$
  and again $(P'/P)(\gamma )>0$ and $(*)$ holds true.

  So now we suppose that $b\in [0.21,0.25]$ and $a\in [0.17,b]$.
  The right of inequalities (\ref{equAn}) with $d=5$ and $j=2$
  implies

  $$\begin{array}{l}
    \xi_2-a<2(b-a)/3~,~~~\, {\rm i.~e.}~~~\, \xi_2<(2b+a)/3<(3b+a)/4~,\\ \\ 
  {\rm so}~~~\, \xi_2-z_2=\xi_2-(a+b)/2<(b-a)/4~.\end{array}$$
  One can apply the left of
  inequalities (\ref{equAn}) with $d=5$ and $j=2$ to conclude that

  $$\begin{array}{l}
    \xi_2-a>(b-a)/4~,~~~\, {\rm i.~e.}~~~\, \xi_2>(b+3a)/4~~~\, {\rm and}~~~\,
    z_2-\xi_2<(b-a)/4~~~\, 
       {\rm hence}\\ \\ 
  |\xi_2-z_2|<(b-a)/4\leq (0.25-0.17)/4=0.02~.\end{array}$$
  %$$\begin{array}{ccl}\xi_2-a>(b-a)/4~,&{\rm i.~e.}&\xi_2>(b+3a)/4\\ \\
  %  &{\rm and}&|\xi_2-z_2|<(b-a)/4\leq (0.25-0.17)/4=0.02~.\end{array}$$
  We consider
  first the case $a\in [0.17,0.205]$. 
  For $a$ fixed,
  the difference $z_1-\xi_1$ is minimal for $b=0.25$ and $c=5/8$.
  We list the values of this difference (computed up to the third decimal)
  for $a=0.17+k\times 0.005$,
  $0\leq k\leq 7$:

  $$\begin{array}{ccllllllll}
    a&&0.17&0.175&0.18&0.185&0.19&0.195&0.2&0.205\\ \\
    z_1-\xi_1&&0.026&0.027&0.029&0.030&0.032&
    0.034&0.035&0.037 ~.\end{array}$$
By Proposition~\ref{prop0} when $a$ increases by $a^{\dagger}$,
$a^{\dagger}\in [0,b-a]$, $\xi_1$ increases by not more than $4a^{\dagger}/5$,
$z_1$ increases by exactly $a^{\dagger}/2$, so $z_1-\xi_1$
decreases by not more than $(4/5-1/2)a^{\dagger}=3a^{\dagger}/10$.
In the above table the step equals $0.005$, so between two values of $a$ of
the table the value of $z_1-\xi_1$ decreases by not more than $0.0015$.

Hence for $a\in [0.17,0.205]$, one has $z_1-\xi_1\geq 0.024$. At the same time
$|\xi_2-z_2|<0.02$, so

$$z_1-\xi_1-(z_2-\xi_2)>0.024-0.02=0.004>0$$
which implies (*).

Suppose now that $b\in [0.21,0.25]$, $a\in [0.205,b]$. We apply the above
reasoning with $a_0=0.205$, $a^{\dagger}\leq 0.25-0.205=0.045$ and
$(z_1-\xi_1)|_{a=a_0}\geq 0.037$ to conclude that

$$(z_1-\xi_1)|_{a=a_0+a^{\dagger}}\geq 0.037-0.045\times 0.3=0.0235>0.02~,$$
so again $z_1-\xi_1-(z_2-\xi_2)>0$ and (*) holds true.

  \end{proof}
  
\begin{lm}
    For $b\in [1/6,1/4]$ and $c\geq 1/2$, one has $\xi_3-\xi_2>z_2-z_1$.
\end{lm}

\begin{proof}
  For fixed $b$ and $c$, the root $\xi_3$ is minimal when $a=0$, see
  Proposition~\ref{prop0}. We show that in this case one has
  $\xi_3>z_3=(b+c)/2$.
  Indeed, set

  $$P_{\diamond}:=2/x+1/(x-b)+1/(x-c)+1/(x-1)~.$$
  Then $P_{\diamond}((b+c)/2)=2(4-3b-3c)/(b+c)(2-b-c)$. For $b\in [1/6,1/4]$
  and $c\in [1/2,1]$, all factors are positive. Thus $\xi_3>z_3$. Hence

  $$\xi_3-\xi_2>z_3-b=(c-b)/2>((1/2)-(1/4))/2=1/8\geq b/2=z_2-z_1~.$$
  \end{proof}

  \begin{lm}
    For $b\in [1/6,1/4]$ and $c\geq 1/2$, one has $\xi_4-\xi_3>z_2-z_1$.
  \end{lm}

  \begin{proof}
    For $b$ and $c$ fixed, the difference $z_3-\xi_3$ is the smallest
    possible when $a=b$, see Proposition~\ref{prop0}. So we consider the
    function

    $$P_{**}:=1/x+2/(x-b)+1/(x-c)+1/(x-1)~.$$
    One finds directly that

    $$\begin{array}{ccl}P_{**}((7c+3b)/10)&=&
      50B/(21(10-7c-3b)(c-b)(3b+7c))~,~~~\, {\rm where}\\ \\ 
    B&=&27b^2+42bc-49c^2-48b+28c~.\end{array}$$
    The polynomial $B$ takes only negative values. Indeed,
    $\partial B/\partial c=42b-98c+28$, with $42b\leq 10.5$ and $98c\geq 49$,
    so $\partial B/\partial c<0$. One finds that

    $$B_0:=B|_{c=1/2}=27b^2-27b+7/4~,~~~\, {\rm where}~~~\,
    \partial B_0/\partial b=27(2b-1)$$
    which is negative for $b\in [1/6,1/4]$. Hence $B_0\leq B_0|_{b=1/6}=-2<0$.
    This implies $\xi_3<(7c+3b)/10$.

    As $\xi_4>z_4=(1+c)/2$, one can minorize the quantity
    $(\xi_4-\xi_3)-(z_2-z_1)$ by

    $$((1+c)/2-(7c+3b)/10)-b/2=0.5-0.2c-0.3b\geq 0.5-0.2-0.3\times 0.25>0$$
    from which the lemma follows.
    
    \end{proof}
  
\end{proof}

\subsection{The domain $S_2$\protect\label{subsecamput3}}

In this subsection we prove Proposition~\ref{prop0506R-}. It defines two
more sets to be deleted from $S$ (these are $T_6$ and $T_5$)
after which one obtains the set $S_2$.

\begin{prop}\label{prop0506R-}
  (1) For $a\in [0,b]$, $b\in [0.25,0.35]$ and $c\in [0.5,0.6]$, one has $R-$.

  (2) For $a\in [0,b]$, $b\in [0.35,0.5]$ and $c\in [0.5,0.6]$,
  one has $R-$.
\end{prop}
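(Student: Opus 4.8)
The plan is to establish in both regimes the single inequality $R-$, namely $\tilde M>M$, by exhibiting one gap $\xi_{j+1}-\xi_j$ strictly larger than $M$. On the relevant part of $S$ one has $c\leq 0.6<1$ and $b\geq 0.25$, so by Remark~\ref{remmM} the maximal midpoint gap is $M=\max((c-a)/2,(1-b)/2)\leq \max(0.3,0.375)=0.375$, and in fact $(1-b)/2\leq 0.375$ and $(c-a)/2\leq 0.3$, so $M\leq 0.375$ throughout. The natural candidate for the large derivative gap is the leftmost one, $\xi_2-\xi_1$, since $\xi_1<z_1=a/2$ is small (Lemma~\ref{lmbasic}) and, as Example~\ref{ex2possibilities}(1) suggests near the vertex $J$, the gap $\xi_2-\xi_1$ is already about $0.37$ there and grows as $b$ decreases. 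So the strategy is: produce a good upper bound for $\xi_1$ and a good lower bound for $\xi_2$, and show $\xi_2-\xi_1>M$.

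For part~(1), with $b\in[0.25,0.35]$, I would first bound $\xi_1$ from above. By Proposition~\ref{prop0}, $\xi_1$ is largest when the other roots $b,c,1$ are pushed as far right as possible and $a$ as far right as possible; more useful is that $\xi_1<z_1=a/2\leq b/2\leq 0.175$, and a sharper bound comes from evaluating $P'/P$ at a suitable point below $z_1$ or from the left inequality in (\ref{equAn}) with $d=5$, $j=1$. Then I would bound $\xi_2$ from below: by Proposition~\ref{prop0} $\xi_2$ is minimal when $a,b,c$ are as small as allowed, i.e.\ $a=0$, $b=0.25$, $c=0.5$, reducing to the explicit polynomial $x^2(x-0.25)(x-0.5)(x-1)$, whose relevant root of the derivative can be computed; monotonicity then gives a uniform lower bound for $\xi_2$ on the whole cell. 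Combining, $\xi_2-\xi_1$ exceeds the worst-case $M=0.375$ with room to spare. Part~(2), with $b\in[0.35,0.5]$, is handled the same way: now $M\leq\max((c-a)/2,(1-b)/2)\leq\max(0.3,0.325)=0.325$, an even weaker target, while $\xi_1<a/2\leq 0.25$ and the corner case $a=0$, $b=0.35$, $c=0.5$ pins the minimum of $\xi_2$; again $\xi_2-\xi_1$ beats $M$.

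The execution will again be organized into sublemmas as in the previous propositions: one lemma asserting $M\leq 0.375$ (resp.\ $\leq 0.325$) on the cell via Remark~\ref{remmM}; one lemma giving $\xi_1\leq$ (explicit constant) using that $\xi_1$ increases in each root (Markov / Proposition~\ref{prop0}) together with $\xi_1<z_1$ and possibly (\ref{equAn}); one lemma giving $\xi_2\geq$ (explicit constant) by reducing via Proposition~\ref{prop0} to the bottom-corner polynomial and computing; then the conclusion $\tilde M\geq\xi_2-\xi_1>M$, i.e.\ $R-$. Where a single corner value is not decisive, I expect to use the Vieta-type control of Proposition~\ref{prop0} ("when a root moves by $\Delta$, a root of $F'$ moves by at most $(d-1)\Delta/d=4\Delta/5$") to interpolate between a small grid of explicit values of $c$, exactly as done in Proposition~\ref{prop0105} — subdividing $c\in[0.5,0.6]$ into steps of size $\leq 0.1$ or smaller if needed.

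The main obstacle will be part~(1) near $b=0.35$, $c=0.6$, $a$ near $b$: there $M=(1-b)/2$ is at its largest ($\approx 0.325$–$0.375$ depending on $b$), and simultaneously $a\approx b\approx 0.35$ makes $\xi_1\approx 0.175$ not so small, so the margin $\xi_2-\xi_1-M$ is smallest there; I will need the lower bound on $\xi_2$ to be genuinely sharp, which is why the reduction to the explicit quartic-after-differentiation polynomial (rather than a crude estimate) is essential, and possibly a finer grid in $c$. If the direct gap $\xi_2-\xi_1$ turns out too tight in a sub-corner, the fallback is to instead compare $\xi_3-\xi_2$ or even $\xi_2-\xi_1$ against the specific $M=(c-a)/2$ that is active there, splitting the cell along the surface $(c-a)/2=(1-b)/2$ and treating each piece with whichever derivative gap is more favorable.
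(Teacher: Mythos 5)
Your plan hinges on the claim that the leftmost derivative gap $\xi_2-\xi_1$ exceeds $M$ on $T_5\cup T_6$, but this is false. The intuition drawn from Example~\ref{ex2possibilities}(1) does not transfer: there $c=1$, so the rightmost root-gap $1-c$ vanishes and $M=0.25$ is small, whereas on $T_5\cup T_6$ one has $c\leq 0.6$, hence $z_4-z_3=(1-b)/2\geq 0.325$, while $\xi_1$ and $\xi_2$ are both crowded into $[0,b]\subset[0,0.5]$. Concretely, take $a=b=0.25$, $c=0.5$ (a corner of $T_6$): then $\xi_2=b=0.25$ is a double root of $F$, $\xi_1\approx 0.069$, so $\xi_2-\xi_1\approx 0.18$, while $M=(1-b)/2=0.375$. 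Even at $a=b=0.35$, $c=0.6$, numerically $\xi_2-\xi_1\approx 0.26<0.325=M$. And at the opposite corner $a=0$, the situation is worse: $\xi_1=0$ and $\xi_2\approx 0.145$ for $b=0.25$, $c=0.5$, giving a gap of $0.145\ll 0.375$. So no amount of sharpening the lower bound on $\xi_2$ or the upper bound on $\xi_1$ will close this: the inequality $\xi_2-\xi_1>M$ simply does not hold on any part of the cell, and your fallback of splitting along $(c-a)/2=(1-b)/2$ and comparing with $\xi_3-\xi_2$ also does not rescue the left gaps.

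The correct gap to use is the rightmost one. Since $c\leq 0.6$ forces $1-c\geq 0.4$, the root $\xi_4$ sits well to the right of $z_4$, and the paper's proof shows precisely $\xi_4-\xi_3>z_4-z_3=M$ by minorizing $\xi_4-z_4$ (taking $a,b$ minimal, sampling $c$ on a grid of step $0.01$, and using Proposition~\ref{prop0} to interpolate) and majorizing $\xi_3-z_3$ (taking $a=b$ maximal, same grid). Part~(2) additionally needs a small case analysis for the possibility $M=z_3-z_2$, which can only occur when $b\geq 0.48$, and is then handled by the same two-sided estimate. The methodological machinery you describe (Proposition~\ref{prop0}, corner reductions, grid in $c$) is exactly what the paper employs; you just aimed it at the wrong pair of consecutive critical points.
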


\begin{proof}
  Part (1). As the following inequalities hold true

  $$\begin{array}{ccl}z_4-z_3&=&(1-b)/2\geq (1-0.35)/2=0.325\\ \\
    &>&0.3=(0.6-0)/2\geq (c-a)/2=z_3-z_2~,\end{array}$$
  one has $M=z_4-z_3$. For $c$ fixed, to find the minimal possible value of
  $\xi_4-z_4$ one has to set $a=0$ and $b=0.25$, see Proposition~\ref{prop0}.
  We compute this quantity for

  $$c~=~0.5~+~0.01\times k~,~~~\, k=0~,~\ldots ~,~10~.$$
  The
  values form a decreasing sequence. For $k=0$ and $k=10$, these values are

  $$0.1035533906\ldots ~~~\, {\rm and}~~~\, 0.0690114951\ldots ~.$$
  When $c$ increases by $0.01$ (resp. by $\leq 0.01$),
  the quantity $-z_4=-(1+c)/2$ decreases by $0.005$ (resp. by $\leq 0.005$)
  while 
  $\xi_4$ increases (by less than $0.01$, see Proposition~\ref{prop0}). Thus
  $\xi_4-z_4>0.069-0.005=0.064$.

  To find the maximal possible value of $\xi_3-z_3$ one has to set
  $a=b=0.35$. One computes then the quantity $\xi_3-z_3$ for the same values of
  $c$ to obtain an increasing sequence with first and last term equal
  respectively to

  $$0.0256598954\ldots ~~~\, {\rm and}~~~\, 0.0410687892\ldots ~.$$
  When $c$ varies by $\leq 0.01$, the quantities $\xi_3$ and $z_3$
  vary in the same direction, and by $\leq 0.01$,
  so their difference varies by $\leq 0.01$.
  This implies

  $$\xi_4-\xi_3-(z_4-z_3)=\xi_4-z_4-(\xi_3-z_3)>0.064-0.052=0.012>0$$
  which proves part (1) of the
  proposition. 
  \vspace{1mm}

Part (2). Suppose that $M=z_4-z_3$. As in part (1) one minorizes the
  quantity $\xi_4-z_4$ by means of Proposition~\ref{prop0} setting
  $a=0$ and $b=0.35$. We compute this quantity for the same values of $c$.
  For $k=0$ and $k=10$, this yields

  $$0.1087868945\ldots ~~~\, {\rm and}~~~\, 0.0729018385\ldots~. $$
  To majorize the quantity $\xi_3-z_3$ one sets $a=b=0.5$ which for
  $k=0$ and $k=10$ gives

  $$0~~~\, {\rm and}~~~\, 0.0162645857\ldots~.$$
  By the same reasoning as in the proof of part (1) one concludes that

  $$\begin{array}{ll}
    \xi_4-z_4\geq 0.0729018385\ldots -0.005>0.067&{\rm and}\\ \\ 
  \xi_3-z_3\leq 0.0162645857\ldots+0.01<0.027~,&{\rm so}\\ \\ 
\xi_4-z_4-(\xi_3-z_3)>0.067-0.027=0.04~.\end{array}$$
  It remains to consider the case when $M=z_3-z_2$. Suppose that

  $$z_4-z_3<z_3-z_2<z_4-z_3+0.04~.$$
  Then $\xi_4-\xi_3>z_3-z_2$ and one has $R-$. So one needs to prove part (2)
  only in the case when $z_3-z_2\geq z_4-z_3+0.04$, i.~e. when
  $c-a\geq 1-b+0.08$. As $c-a\leq 0.6$, this implies

  $$b\geq 1-(c-a)+0.08\geq 1-0.6+0.08=0.48~.$$
  So now
  we suppose that $a\in [0,0.1]$, $b\in [0.48,0.5]$, $c\in [0.5,0.6]$.
  We minorize then the quantity $\xi_4-z_4$ setting $a=0$ and $b=0.48$.
  For $k=0$ and $k=10$, we obtain

  $$0.1183113455\ldots ~~~\, {\rm and}~~~\, 0.0801188068\ldots$$
  respectively. Hence $\xi_4-z_4\geq 0.0801188068\ldots -0.005>0.075$.
  With the same majoration of
  $\xi_3-z_3$ one deduces that

  $$\xi_4-z_4-(\xi_3-z_3)>0.075-0.027=0.048~.$$
  However one has $z_4-z_3=(1-b)/2\geq 0.26$ and $z_3-z_2=(c-a)/2\leq 0.3$,
  so

  $$\begin{array}{ccl}\xi_4-\xi_3-(z_3-z_2)&=&
    \xi_4-\xi_3-(z_4-z_3)+((z_4-z_3)-(z_3-z_2))\\ \\
    &>&0.048+(0.26-0.3)=0.048-0.04=0.008>0\end{array}$$
    which is the case $R-$.

\end{proof}

\section{Proving numerically that the case $L-R+$ fails on the set $S_2$
  \protect\label{secnumer}}

In the space $Oabc$ we define the distance by means of the function
$|a|+|b|+|c|$. By $\delta >0$ we denote the step of computation which 
for different parts of $S_2$ will take the values
$10^{-3}$, $5\times 10^{-4}$, $2.5\times 10^{-4}$ and $10^{-4}$.
To fix the ideas we explain first how the computation works for the subset
$S_3\subset S_2$, where

$$S_3~:=~\{ ~(a,b,c)\in \mathbb{R}^3~|~a\in [0,b]~,~b\in [0.25,0.44]~,~
c\in [0.6,1]~\}~,$$
so $S_3$, like $S_2$, is a cylinder over a right trapezoid. The other parts
of $S_3$ will be cylinders either over right trapezoids or over
rectangular cuboids,
see the explanations preceding Fig.~\ref{Yousratab}.
We consider the set $\Theta \subset Oabc$ of points of the form

$$\Theta ~:=~\{ ~A:=(k\times \delta,~\ell \times \delta ,~n\times
\delta )~,~~~\,
k,~\ell ,~n\in \mathbb{Z},~~~\, A\in S_3~\}~.$$
For $S_3$, the step $\delta$ is equal to $10^{-3}$, but the computation is
explained for $\delta$ equal to any of the four possible values.
For each point $A_1:=(\alpha _1,\beta _1,\gamma _1)\in S_3$, there exists a point
$A_2:=(\alpha _2,\beta _2,\gamma _2)\in \Theta$ such that

$$|\alpha_2-\alpha_1|\leq \delta ~,~~~\, |\beta_2-\beta_1|\leq \delta ~,~~~\,
|\gamma_2-\gamma_1|\leq \delta ~.$$
For each point $A\in S_3$, we set

$$\tilde{m}(A):=\min_{i=1,2,3} (\xi_{i+1}(A)-\xi_i(A))~~~\, {\rm and}~~~\,
\tilde{M}(A):=\max_{i=1,2,3} (\xi_{i+1}(A)-\xi_i(A))~.$$
We
show numerically that for $A_2\in \Theta$, one has

\begin{equation}\label{equgrid}
  \tilde{m}(A_2)\geq m(A_2)+6\times \delta ~~~\, {\rm or}~~~\,
\tilde{M}(A_2)\geq M(A_2)+6\times \delta ~.\end{equation}
In the first case $L-$ fails at the point $A_2$,
in the second case one does not have $R+$.

This implies that one does not have $L-R+$ in $S_3$. 
Indeed, one can connect the points $A_1$ and $A_2$ by the piecewise-linear path
$A_1A_3A_4A_2\subset S_3$, where

$$A_3:=(\alpha_2,\beta_1,\gamma_1)~~~\, {\rm and}~~~\,
A_4:=(\alpha_2,\beta_2,\gamma_1)~.$$ 
For each segment $A_{\mu}A_{\nu}$, where $(\mu ,\nu )=(1,3)$,
$(3,4)$ or $(4,2)$, one applies
Proposition~\ref{prop0} to obtain the inequalities

$$|\tilde{m}(A_{\mu})-\tilde{m}(A_{\nu})|\leq \delta ~~~\, {\rm and}~~~\,
|\tilde{M}(A_{\mu})-\tilde{M}(A_{\nu})|\leq \delta ~.$$
Along the path $A_1A_3A_4A_2\subset S_2$ each of the quantities $|z_1|=|b/2|$
and 
$|z_3|=|(1-b)/2|$ varies by not more than $\delta /2$; $|z_2|=|(c-a)/2|$
varies by not more than $\delta$. Thus at $A_1$ one has

$$\begin{array}{lcl}\tilde{m}(A_1)\geq \tilde{m}(A_2)-3\times \delta ~,&&
  \tilde{M}(A_1)\geq \tilde{M}(A_2)-3\times \delta ~,\\ \\
  |m(A_1)-m(A_2)|\leq 2\times \delta&{\rm and}&|M(A_1)-M(A_2)|\leq
  2\times \delta ~,\end{array}$$
so one gets

$$\begin{array}{l}\tilde{m}(A_1)\geq \tilde{m}(A_2)-3\times \delta
  \geq m(A_2)+3\times \delta
  \geq m(A_1)+\delta ~~~\, {\rm or}\\ \\ 
  \tilde{M}(A_1)\geq \tilde{m}(A_2)-3\times \delta \geq M(A_2)+
  3\times \delta \geq M(A_1)+\delta ~.
\end{array}$$
For the other subsets of $S_2$ the choice of the steps $\delta$ is explained
by the following table (see Fig.~\ref{Yousratab} in which the second line
corresponds to the domain~$S_3$).
The inequality $a\leq b$ is everywhere self-understood.
By the letters A, B, C and D we indicate that
$\delta =10^{-3}$, $5\times 10^{-4}$, $2.5\times 10^{-4}$ and $10^{-4}$
respectively. Thus the box in the right lower corner of the table says that
for $b\in [0.48,0.5]$, the computation for the right trapezoid
$(a,c)\in [0.38,~b]\times [0.9,~1]$ is performed with $\delta =10^{-3}$
while for the rectangular cuboids, all with $a\in [0,~0.38]$ and  

$$c\in [0.9,~0.95]~,~~~\, c\in [0.95,~0.99]~~~\, {\rm and}~~~\, c\in [0.99,~1]$$
respectively, the step equals
$5\times 10^{-4}$, $2.5\times 10^{-4}$ and $10^{-4}$.

\begin{figure}[htbp]
%\includegraphics[scale=0.5]{parthetanegfirstfour.eps}
%\centerline{\hbox{\includegraphics[scale=0.7]{parthetanegfirstfour.eps}}}
\centerline{\hbox{\includegraphics[scale=0.5]{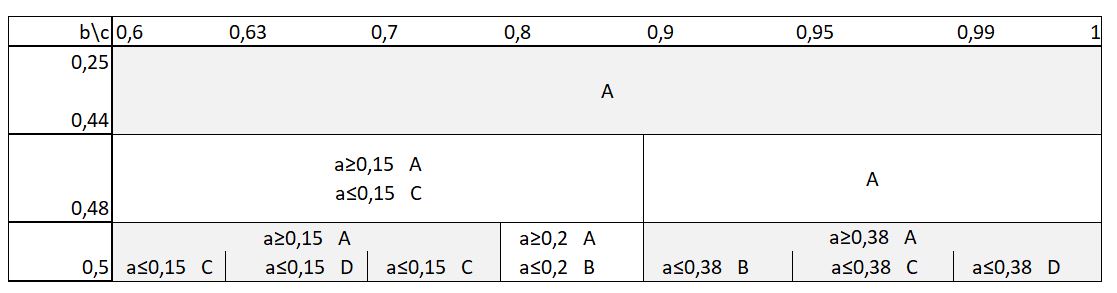}}}
%\centerline{\hbox{\epsfxsize=10cm \epsfbox{k=1234.pdf}}}
\caption{The choice of the step $\delta$ in the different parts of $S_2$.
  The letters A, B, C and D mean $\delta =10^{-3}$, $5\times 10^{-4}$,
  $2.5\times 10^{-4}$ and $10^{-4}$ respectively.}
\label{Yousratab}
\end{figure}
As explained above, we show numerically
that for every grid point $(a,b,c)$ of $\Theta$,
one has (\ref{equgrid}). We browse the grid points according
to a cycle over $c$ (from the
lowest to the largest value). For each fixed value of $c$, there is a similar
cycle over~$b$, and for each $b$ fixed, there is a cycle over~$a$.
To this end we developed a code in Python (see Algorithm~\ref{algo})
to estimate the values of $\tilde{M}$ and $M$ (respectively of $\tilde{m}$ and $m$)
and calculate the difference between them
for every polynomial having roots $0\leq a\leq b\leq c\leq 1)$.
%Different values
%of $\delta$ are considered as shown in the table in the figure \ref{tableau}
%to obtain the result.
%In some sub-domains we are obliged to consider a finer discretization of
%$\theta$ (and therefore a lower delta) to conclude.

\begin{algorithm}[H]
  \caption{Algorithm verifying $\tilde{m}(A_2)\ge m(A_2) + 6\delta$
    or $\tilde{M}(A_2) \ge M(A_2) + 6\delta$}
\label{algo}
\begin{algorithmic}[1]
    \STATE Choose a value of $\delta$.
    \STATE Choose the subdomain of $\Theta$ ($c \in [C_1, C_2]$,
    $b\in [B_1,B_2]$, $a\in[A_1,A_2]$ with $C_1\geq 0.6$, $C_2\leq 1$,
    $B_1\geq 0.25$, $B_2\leq 0.5$, $A_1\geq 0$ and $A_2\leq b$).
    \STATE For every grid point $(a,b,c)$ in the subdomain,
    consider the polynomial having the roots $0\leq a\leq b\leq c\leq 1)$,
    calculate the midpoints and find the minimum $m$ and maximum
    $M$ of these midpoints.
    \STATE Differentiate the polynomial and find the roots $\xi_i$.
    \STATE Calculate the minimum $\tilde{m}$ and the maximum $\tilde{M}$
    of $\xi_{i+1}-\xi_i$.
    \STATE Check that $\tilde{m}(A_2)\geq m(A_2) + 6\delta$ or
    $\tilde{M}(A_2) \geq M(A_2) + 6\delta$.
    \STATE Stop and send an error message if the condition is not met.
\end{algorithmic}
\end{algorithm}
The calculations are performed on a computer with an Intel Core
i7-8550U 1.80 GHz processor and 8 GB of RAM. The calculation time is
relatively long. For example, the time needed for the sub-domain
$a\in [0, b]$, $b\in [0.25, 0.44]$ and $c\in[0.6, 1]$ is one hour
and the calculation time is all the more important when the step $\delta$
is low.

\end{document}